\documentclass[12pt,reqno]{amsart}
\setlength{\topmargin}{0cm}
\setlength{\textheight}{21cm}
\setlength{\oddsidemargin}{0in}
\setlength{\evensidemargin}{0in}
\setlength{\textwidth}{6.5in}
\setlength{\parindent}{.25in}

\pagestyle{plain}
\usepackage{amssymb, amsmath, amsthm}
\usepackage{enumerate}
\textwidth=15.7cm
\textheight=22.5cm
\parskip=3pt
\parindent=8mm
\oddsidemargin=2mm
\evensidemargin=0mm
\topmargin=-0.5cm
\marginparwidth=1cm
\newtheorem{thm}{Theorem}[section]
\newtheorem{lem}[thm]{Lemma}

\newtheorem{prop}[thm]{Proposition}

\numberwithin{equation}{section}

\newcommand{\bel}{\begin{equation} \label}
\newcommand{\ee}{\end{equation}}
\def\beq{\begin{equation}}
\def\eeq{\end{equation}}
\newcommand{\bea}{\begin{eqnarray}}
\newcommand{\eea}{\end{eqnarray}}
\newcommand{\beas}{\begin{eqnarray*}}
\newcommand{\eeas}{\end{eqnarray*}}
\newcommand{\pd}{\partial}
\newcommand{\mdiv}[1]{\ensuremath{\mathrm{div} \left( #1 \right)}}
\newcommand{\dd}{\mbox{d}}

\newcommand{\R}{\mathbb{R}}
\newcommand{\C}{\mathbb{C}} 
\newcommand{\N}{\mathbb{N}}

\newcommand{\cD}{\mathcal{D}}
\newcommand{\cL}{\mathcal{L}}
\newcommand{\cO}{\mathcal{O}}
\newcommand{\Hin}{\mathcal{H}_{\mathrm{in},T_0}}
\newcommand{\Gi}{S_{\mathrm{in}}}
\newcommand{\Go}{S_{\mathrm{out}}}
\newcommand{\dom}{\mathrm{Dom}}

\renewcommand{\div}{\mathrm{div}\,}  

\newcommand{\supp}{\mathrm{supp}\,}  

\allowdisplaybreaks

\def\epsilon{\varepsilon}
\def\phi {\varphi}

\providecommand{\abs}[1]{\left\lvert#1\right\rvert}
\providecommand{\norm}[1]{\left\lVert#1\right\rVert}

\renewcommand{\leq}{\leqslant}
\renewcommand{\geq}{\geqslant}
\providecommand{\abs}[1]{\left\lvert#1\right\rvert}
\providecommand{\norm}[1]{\left\lVert#1\right\rVert}

\title{Global uniqueness in an inverse problem for time fractional diffusion equations}
\author{Y. Kian, L. Oksanen, E. Soccorsi, M. Yamamoto}

\date{}
\begin{document}
\maketitle

\baselineskip 18pt
\begin{abstract}
Given $(M,g)$, a compact connected Riemannian manifold of dimension $d \geq 2$, with boundary $\partial M$, we consider an initial boundary value problem for a fractional diffusion equation on $(0,T) \times M$, $T>0$, with time-fractional Caputo derivative of order $\alpha \in (0,1) \cup (1,2)$. We prove uniqueness in the inverse problem of determining the smooth manifold $(M,g)$ (up to an isometry), and various time-independent smooth coefficients appearing in this equation, from measurements of the solution on a subset of $\pd M$ at fixed time. In the ``flat" case where $M$ is a compact subset of $\R^d$, two out the three coefficients $\rho$ (weight), $a$ (conductivity) and $q$ (potential) appearing in the equation $\rho \pd_t^\alpha u-\mdiv{a \nabla u}+ q u=0$ on $(0,T)\times \Omega$ are recovered simultaneously.

\medskip
\noindent
{\bf  Keywords:} Inverse problems, fractional diffusion equation, partial data, uniqueness result.\\

\medskip
\noindent
{\bf Mathematics subject classification 2010 :} 35R30, 	35R11, 58J99.
\end{abstract}
\section{Introduction}
\subsection{Statement of the problem}

Let $(M,g)$ be a compact connected Riemannian manifold of dimension $d \geq 2$, with boundary $\partial M$. 
For a strictly positive function $\mu$ we consider the weighted Laplace-Beltrami operator
$$
\Delta_{g,\mu} := \mu^{-1} \div_g\, \mu\, \nabla_g,
$$
where $\div_g$ (resp., $\nabla_g$) denotes the divergence (resp., gradient) operator on $(M,g)$,
and $\mu^{\pm 1}$ stands for the multiplier by the function $\mu^{\pm 1}$.
If $\mu$ is identically $1$ in $M$ then $\Delta_{g,\mu}$ coincides with the usual 
Laplace-Beltrami operator on $(M,g)$.
In local coordinates, we have
$$ \Delta_{g,\mu} u = \sum_{i,j=1}^d \mu^{-1} |g|^{-1/2}\partial_{x_i}(\mu |g|^{1/2}g^{ij}\partial_{x_j} u),
\quad u \in C^\infty(M), $$
where $g^{-1} :=(g^{ij})_{1 \leq i,j \leq d}$ and $|g| :=\mbox{det}\ g$. For $\alpha \in (0,2)$ we consider the initial boundary value problem (IBVP)
\bel{eqM1}
\left\{ \begin{array}{rcll}  \pd_t^\alpha u-\Delta_{g,\mu} u + q u & = & 0, & \mbox{in}\ (0,T)\times M,\\  
u & = & f, & \mbox{on}\ (0,T)\times \partial M,\\  
\pd_t^k u(0,\cdot) & = & 0, & \mbox{in}\ M,\ k=0,...,m,
\end{array} \right.
\ee
with non-homogeneous Dirichlet data $f$. Here $m :=[\alpha]$ denotes the integer part of $\alpha$ and $\pd_t^\alpha$ is the Caputo fractional derivative of order $\alpha$ with respect to $t$, defined by
\bel{cap} 
\pd_t^\alpha u(t,x):=\frac{1}{\Gamma(m+1-\alpha)}\int_0^t(t-s)^{m-\alpha}\pd_s^{m+1} u(s,x) \dd s,\ (t,x) \in Q,
\ee
where $\Gamma$ is the usual Gamma function expressed as $\Gamma(z):=\int_0^{+\infty} e^{-t} t^{z-1} \dd t$ for all $z \in \C$ such that $\Re{z}>0$. 
The system \eqref{eq1} models anomalous diffusion phenomena. In the sub-diffusive case $\alpha \in (0,1)$, the first line in \eqref{eq1} is usually named fractional diffusion equation, while in the super-diffusive case $\alpha\in (1,2)$, it is referred as fractional wave equation. 

Given two non empty open subsets $\Gi$ and $\Go$ of $\pd M$, $T_0 \in (0,T)$ and $\alpha \in (0,2)$, we introduce the function space
$$ \mathcal H_{\mathrm{in},\alpha, T_0} := \{ f \in C^{[\alpha]+1}([0,T], H^{\frac{3}{2}}(\pd M));\ \supp{f} \subset (0,T_0) \times \Gi \}, $$
where we recall that $[\alpha]$ stands for the integer part of $\alpha$. 
As established in Section 2, problem \eqref{eqM1} associated with $f \in \mathcal H_{\mathrm{in},\alpha, T_0}$ is well posed and the partial Dirichlet-to-Neumann (DN) map
\bel{M-def-DN}
\Lambda_{M,g,\mu,q}:\ \mathcal H_{\mathrm{in},\alpha, T_0} \ni f \mapsto 
\partial_\nu u(T_0, \cdot)_{\vert \Go}
:= \sum_{i,j=1}^d g^{ij}\nu_i\partial_{x_j} u (T_0, \cdot)_{\vert \Go},
\ee
where $u$ denotes the solution to \eqref{eqM1} and $\nu$ is the outward unit normal vector field along the boundary $\pd M$, is linear bounded from $\mathcal H_{\mathrm{in},\alpha, T_0}$ into $L^2(\Go)$.

In this paper we examine the problem whether knowledge of $\Lambda_{M,g,\mu,q}$ determines the Riemannian manifold $(M,g)$, and the functions $\mu$ and $q$, uniquely.

\subsection{Physical motivations}
Recall that fractional diffusion equations  with time fractional derivatives 
of the form 
\eqref{eqM1} describe  several physical phenomena  related to anomalous 
diffusion  such as diffusion of substances in heterogeneous media, 
diffusion of 
fluid flow in inhomogeneous anisotropic porous media, turbulent plasma, diffusion of carriers in amorphous photoconductors, diffusion in a
turbulent flow, a percolation
model in porous media, fractal media, various biological phenomena and finance 
problems (see  \cite{CSLG}). 
In particular, it is known (e.g., \cite{AG}) that
the classical diffusion-advection equation does not often 
interpret field data of diffusion of substances in the soil, and 
as one model equation, the fractional diffusion equation is used.

The diffusion equation with time fractional derivative is a corresponding 
macroscopic model equation to the continuous-time random walk (CTRW in short) 
and is derived from the CTRW (e.g., \cite{MK,RA}).

In particular, in the case where we consider fractional diffusion equations 
describing the diffusion of contaminants in a soil, we cannot a priori 
know governing parameters in \eqref{eqM1} such as 
reaction rate of pollutants.  Thus for prediction of contamination,
we need to discuss our inverse problem of determining these 
parameters from measurements of the flux on $\Go$ at 
a fixed time $t=T_0$  associated to  Dirichlet inputs at $\Gi$.

\subsection{State of the art}

Fractional derivative, ordinary and partial, differential equations have attracted attention over the two last decades. See \cite{KSTruj,MR,SM, P} 
regarding fractional calculus, and \cite{AW, GM,L}, and references therein, for studies of partial differential equations 
with time fractional derivatives. More specifically, the well-posedness of problem \eqref{eqM1} with time-independent coefficients is examined in 
\cite{BY, GLYama, SY}, and recently, 
weak solutions to \eqref{eqM1} have been defined in \cite{KY}.

There is a wide mathematical literature for inverse
coefficients problems associated with the system \eqref{eqM1} when $\alpha =1$ or $2$. Without being exhaustive, we refer to \cite{BK,CK1,Cho,CK,CY,GK,Katchalov2004} for the parabolic case $\alpha=1$ and to \cite{Belishev1987,Belishev1992,BCY,BF,BJY,A,CLS,Ki1,Lassas2010,LO,Lassas2014,LL,Mo,SU1,SU2,SU3} for the hyperbolic case $\alpha=2$. In contrast to parabolic or hyperbolic inverse coefficient problems, there is only a few mathematical papers dealing with inverse problems associated with \eqref{eqM1} when $\alpha\in(0,1) \cup (1,2)$. In the one-dimensional case, \cite{CNYY} uniquely determines the fractional order $\alpha$ and a time-independent coefficient, by Dirichlet boundary measurements. For $d\geq2$, the fractional order $\alpha$ is recovered in \cite{HNWY} from pointwise measurements of the solution over the entire time span. In \cite{SY}, the authors prove stable determination of the time-dependent prefactor of the source term. In the particular case where $d=1$ and $\alpha=1 / 2$, using a specifically designed Carleman estimate for \eqref{eqM1}, \cite{CXY,YZ}  derive a stability estimate of a zero order time-independent coefficient, with respect to partial internal observation of the solution. In \cite{LIY}, time-independent coefficients are uniquely identified by the Dirichlet-to-Neumann map obtained by probing the system with inhomogeneous Dirichlet boundary conditions of the form $\lambda(t)g(x)$, where $\lambda$ is a fixed real-analytic positive function of the time variable. Recently, \cite{FK} proved unique determination of a time-dependent parameter appearing in the source term or in a zero order coefficient, from pointwise measurements of the solution over the whole time interval.

\subsection{Main results}
The paper contains two main results. Both of them are uniqueness results for inverse coefficients problems associated with \eqref{eqM1}, but related to two different settings. In the first one, $(M,g)$ is a known compact subset of $\R^d$, while in the second one, $(M,g)$ is an unknown Riemannian manifold to be determined. 
The first setting is not contained in the second one, however, since 
in the second case, $(M,g)$ and all the other unknown coefficients are assumed to be smooth, while in the first case the regularity assumptions are relaxed considerably.

We begin by considering the case of a connected bounded domain $\Omega$ in $\R^d$, $d \geq 2$, with $C^{1,1}$ boundary $\pd \Omega$. Let $\rho \in C(\overline{\Omega})$, $V \in L^{\infty}(\Omega)$ and $a \in C^1(\overline{\Omega})$ fulfill the condition
\bel{t1a}
\rho(x) \geq c,\ a(x) \geq c,\ V(x) \geq 0,\ x \in \Omega,
\ee
for some positive constant $c$. For $M := \overline \Omega$, put
\bel{g_mu_from_a_rho}
g := \rho a^{-1}I_d,\ \mu := \rho^{1-d/2} |a|^{1/2},\ \mbox{and}\ q :=\rho^{-1}V,
\ee
in the first line of \eqref{eqM1}, where $I_d$ denotes the identity matrix in $\R^{d^2}$. Since
$(M,g)$ is a Riemannian manifold with boundary such that
$\mu |g|^{1/2} = \rho$, $g^{ij}=0$ if $i \neq j$, and $g^{ii}= \rho^{-1} a$ for $i , j \in \{1,\ldots,d \}$,
, we have
$$
\Delta_{g,\mu} u = \rho^{-1} \div (a \nabla u),\ u \in C^\infty(\overline \Omega).
$$
Therefore, \eqref{eqM1} can be equivalently rewritten as
\bel{eq1}
\left\{ \begin{array}{rcll} 
\rho \pd_t^\alpha u-\mdiv{a \nabla u}+ V u & = & 0, & \mbox{in}\ Q := (0,T) \times \Omega,\\  
u & = & f, & \mbox{on}\ \Sigma := (0,T) \times \pd \Omega,\\  
\partial_t^ku(0,\cdot) & = & 0, & \mbox{in}\ \Omega,\ k=0,\ldots,m.
\end{array} \right.
\ee
As will appear in Section \ref{sec-direct} for any arbitrary $\alpha\in (0,1) \cup (1,2)$ and $T_0 \in (0,T)$, the partial DN map  
\bel{def-DN}
\Lambda_{\rho,a,V}:\ \mathcal H_{\mathrm{in},\alpha, T_0} \ni f \mapsto a \partial_\nu u(T_0, \cdot)_{\vert \Go} := a \nabla u(T_0,\cdot) \cdot \nu_{\vert \Go}
\ee
where $u$ is the solution to \eqref{eq1} and $\nu$ is the outward unit normal vector to $\pd\Omega$, is bounded from $\mathcal H_{\mathrm{in},\alpha, T_0}$ into $L^2(\Go)$. Our first result claims that knowledge of $\Lambda_{\rho,a,V}$ uniquely determines
two out of the three coefficients $\rho$, $a$, and $V$, which are referred as, respectively, the density, the conductivity, and the (electric) potential.

\begin{thm}
\label{thm-main}
Assume that $\Gi \cap \Go \neq \emptyset$ and that $\Gi \cup \Go=\pd\Omega$. For $j=1,2$, let $\rho_j \in L^{\infty}(\Omega)$, $a_j \in  W^{2,\infty}(\Omega)$, and $V_j \in L^{\infty}(\Omega)$ satisfy \eqref{t1a} with $\rho=\rho_j$, $a=a_j$, $V=V_j$. Moreover, let either of the three following conditions be fulfilled:
\begin{enumerate}[(i)]
\item $\rho_1=\rho_2$ and
\bel{t1c}
\nabla a_1(x)=\nabla a_2(x),\ x \in \pd \Omega.
\ee
\item $a_1=a_2$ and
\bel{t1b}
\exists C>0,\ | \rho_1(x)-\rho_2(x) | \leq C \mathrm{dist}(x,\pd \Omega)^2,\ x \in \Omega.
\ee
\item $V_1=V_2$ and \eqref{t1c}-\eqref{t1b} hold simultaneously true.
\end{enumerate}
Then, $\Lambda_{\rho_1,a_1,V_1}=\Lambda_{\rho_2,a_2,V_2}$ yields $(\rho_1,a_1,V_1)=(\rho_2,a_2,V_2)$.
\end{thm}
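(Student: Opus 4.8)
The plan is to strip the time-fractional dynamics off the problem by a Laplace transform, reducing \eqref{eq1} to a parameter-dependent elliptic equation, and then to settle the resulting stationary inverse problem by a Liouville reduction to a Schr\"odinger equation combined with a partial-data Calder\'on argument. First I would exploit the vanishing initial conditions: the Laplace transform in time sends $\pd_t^\alpha u$ to $s^\alpha \hat u(s,\cdot)$, so that for $\Re s>0$ the transform $\hat u(s,\cdot)$ solves the elliptic Dirichlet problem $-\mdiv{a\nabla \hat u}+(V+s^\alpha\rho)\hat u=0$ in $\Omega$ with boundary value $\hat f(s,\cdot)$. Probing \eqref{eq1} with separated data $f(t,x)=\theta(t)h(x)$, $\theta\in C_c^\infty(0,T_0)$ and $\supp h\subset\Gi$, the measurement $a\pd_\nu u(T_0,\cdot)_{|\Go}$ is a time convolution of $\theta$ against the impulse-response kernel $t\mapsto a\pd_\nu u(t,\cdot)_{|\Go}$, evaluated at $T_0$. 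Sweeping $\theta$ over $C_c^\infty(0,T_0)$ therefore recovers this kernel on $(0,T_0)$; since the kernel is analytic in $t$ on $(0,\infty)$ it is determined for all $t>0$, and its Laplace transform shows that $\Lambda_{\rho,a,V}$ determines the partial elliptic DN map $h\mapsto a\pd_\nu v_{|\Go}$ of $-\mdiv{a\nabla v}+(V+\lambda\rho)v=0$, $v_{|\pd\Omega}=h$ with $\supp h\subset \Gi$, for every $\lambda=s^\alpha$ with $s>0$, hence for $\lambda$ in an infinite subset of $(0,\infty)$.

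Next I would remove the conductivity by the Liouville substitution $w=a^{1/2}v$, which turns the parametrized equation into the Schr\"odinger equation $-\Delta w+(Q_a+\lambda\kappa)w=0$, where $Q_a:=a^{-1/2}\Delta(a^{1/2})+a^{-1}V$ and $\kappa:=\rho/a$, and the conductivity flux $a\pd_\nu v$ into $\pd_\nu w$ up to boundary factors built from $a$ and $\pd_\nu a$ on $\pd\Omega$. Boundary determination on the overlap $\Gi\cap\Go$ gives $a_1=a_2$ there, and since \eqref{t1c} makes $\nabla(a_1-a_2)$ vanish on the connected $\pd\Omega$, one obtains $a_1=a_2$ and $\pd_\nu a_1=\pd_\nu a_2$ on all of $\pd\Omega$, so the substitution matches on the boundary. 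Condition \eqref{t1b} forces $\kappa_1-\kappa_2$ to vanish to second order at $\pd\Omega$ (automatic from $\rho_1=\rho_2$ in case (i), and from $a_1=a_2$ in case (ii)), so that the two Schr\"odinger potentials agree near $\pd\Omega$. Consequently $\Lambda_{\rho_1,a_1,V_1}=\Lambda_{\rho_2,a_2,V_2}$ implies equality of the two partial Schr\"odinger DN maps, with data supported in $\Gi$ and flux read on $\Go$, for every admissible $\lambda$.

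Then I would invoke the partial-data uniqueness theorem for the Schr\"odinger equation valid under the present geometric assumption $\Gi\cap\Go\neq\emptyset$, $\Gi\cup\Go=\pd\Omega$, inserting the corresponding complex geometric optics solutions into the Alessandrini identity, to conclude $Q_{a_1}+\lambda\kappa_1=Q_{a_2}+\lambda\kappa_2$ throughout $\Omega$ for every admissible $\lambda$. Being affine in $\lambda$ and vanishing on an infinite set, this identity yields simultaneously $\kappa_1=\kappa_2$ and $Q_{a_1}=Q_{a_2}$ in $\Omega$. The coefficients are then separated case by case: in (i), $\rho_1=\rho_2$ with $\kappa_1=\kappa_2$ gives $a_1=a_2$, and then $Q_{a_1}=Q_{a_2}$ gives $V_1=V_2$; in (ii), $a_1=a_2$ makes the substitution identical for both indices, so $\kappa_1=\kappa_2$ and $Q_{a_1}=Q_{a_2}$ at once give $\rho_1=\rho_2$ and $V_1=V_2$; in (iii), $V_1=V_2$ turns $Q_{a_1}=Q_{a_2}$ into a nonlinear elliptic identity for $a_1^{1/2}$ and $a_2^{1/2}$ whose Cauchy data agree on $\pd\Omega$ by \eqref{t1c}, so a uniqueness argument for that equation gives $a_1=a_2$, whence $\rho_1=\rho_2$ from $\kappa_1=\kappa_2$.

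The main obstacle is twofold. The first delicate point is the rigorous reduction of the first step: justifying that a measurement at the single time $T_0$ recovers the full impulse-response kernel, controlling the analyticity in $t$ and the passage to the elliptic DN map through the fractional convolution structure uniformly for $\alpha\in(0,1)\cup(1,2)$. The second, and analytically heaviest, is the partial-data Calder\'on input under the weak covering-and-overlap hypothesis on $\Gi,\Go$; with the case (iii) uniqueness for the semilinear elliptic identity from matching Cauchy data as the remaining technical hurdle.
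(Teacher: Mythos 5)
Your first step is essentially the paper's own: probing with separated inputs $f(t,x)=\theta(t)h(x)$, using the convolution structure to recover the impulse-response kernel on $(0,T_0)$, extending by analyticity in $t$, and Laplace transforming to obtain the $\lambda$-parametrized family of partial elliptic DN maps (input supported in $\Gi$, flux read on $\Go$) is exactly Steps 1--3 of the paper's proof of its Theorem \ref{t2}, modulo convergence details that the paper handles with some care (it multiplies by $t^{\alpha+1}$, i.e.\ differentiates the Laplace transform twice, precisely so that the relevant series converge absolutely in $L^2(\Go)$ via Lemma \ref{lm-elliptic}). Where you genuinely diverge from the paper is in how the multi-frequency elliptic data is exploited, and this is where there is a real gap.

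Your third step invokes ``the partial-data uniqueness theorem for the Schr\"odinger equation valid under the present geometric assumption $\Gi\cap\Go\neq\emptyset$, $\Gi\cup\Go=\pd\Omega$'' at fixed spectral parameter $\lambda$. No such theorem is available: for $d\geq 3$, all known partial-data Calder\'on results (Bukhgeim--Uhlmann, Kenig--Sj\"ostrand--Uhlmann, Chung, Kenig--Salo) require either Dirichlet inputs on the \emph{whole} boundary or geometric restrictions on the input/output sets tied to limiting Carleman weights; the ``local data'' problem with Dirichlet data supported in an arbitrary open $\Gi$ and Neumann data observed on an arbitrary open $\Go$ is open, and the covering-plus-overlap hypothesis does not place you in any known case. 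Concretely, the Alessandrini identity here only yields $\int_\Omega\bigl(Q_{a_1}-Q_{a_2}+\lambda(\kappa_1-\kappa_2)\bigr)v_1v_2\,\dd x=0$ for pairs of solutions whose boundary traces are supported in $\Gi$ and $\Go$ respectively; CGO solutions do not have such restricted traces, and the extra freedom in $\lambda$ does nothing to remove the uncontrolled boundary term. The way the $\lambda$-family can actually be used--and this is what the paper does--is spectral rather than CGO-based: one continues meromorphically in the spectral parameter, reads off the poles $-\lambda_{j,n}$ and the residues $\Theta_{j,n}(x,y)$ on $\Go\times\Gi$ (the paper's Step 4), upgrades to full boundary spectral data by the algebraic argument of \cite[pp.\ 975-976]{CK1} (this is exactly where $\Gi\cup\Go=\pd\Omega$ and $\Gi\cap\Go\neq\emptyset$ are used), and then applies the partial-data Borg--Levinson theorem of \cite{CK2} (Proposition \ref{p1}), whose hypotheses are precisely the conditions (i)--(iii); your final case-by-case separation of $(\rho,a,V)$ is in effect an attempt to reprove that cited theorem, and it inherits the same unsupported partial-data input. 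A secondary, fixable issue: your boundary-determination step assumes $\pd\Omega$ is connected (a connected $\Omega$ can have disconnected boundary), so \eqref{t1c} alone only propagates $a_1=a_2$ along the component of $\pd\Omega$ meeting $\Gi\cap\Go$.
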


The second result describes the identifiability properties of the Riemannian manifold $(M,g)$ and the functions $\mu \in \mathcal C^\infty(M)$ and $q \in \mathcal C^\infty(M)$, appearing in the first line of the IBVP \eqref{eqM1}, that can be inferred from $\Lambda_{M,g,\mu,q}$.
It is well known that the DN map is invariant under isometries fixing the boundary. Moreover, gauge equivalent coefficients $(\mu, q)$ cannot be distinguished by the DN map either. Here and henceforth, $(\mu_1,q_1)$ and $(\mu_2,q_2)$ are said gauge equivalent if there exists a strictly positive valued function $\kappa \in C^\infty(M)$ satisfying
\begin{align}
\label{kappa_near_bd}
\kappa(x) = 1
\  \mbox{and} \ 
\pd_\nu \kappa(x) =0,\ x \in \pd M
\end{align}
such that
\begin{align}
\label{gauge_mu_q}
 \mu_2 = \kappa^{-2} \mu_1,
\quad  q_2 = q_1 - \kappa \Delta_{g, \mu_1} \kappa^{-1}.
\end{align}

Our second statement in as follows.

\begin{thm}
\label{t5} 
For $j=1,2$, let $(M_j,g_j)$ be two compact and smooth connected Riemannian manifolds of dimension $d \geq 2$ with the same boundary, and let $\mu_j \in C^\infty(M_j)$ and $q_j \in C^\infty(M_k)$
satisfy $\mu_j(x) > 0$ and $q_j(x) \ge 0$
for all $x \in M_j$.
Let $\Gi, \Go \subset \pd M_1$
be relatively open
and suppose that $\overline{\Gi} \cap \overline{\Go} \ne \emptyset$.
Suppose, moreover, that $g_1 = g_2$, $\mu_1 = \mu_2=1$
and $\pd_\nu \mu_1 = \pd_\nu \mu_2=0$ on $\pd M_1$.
Then, $\Lambda_{M_1,g_1,\mu_1,q_1}= \Lambda_{M_2,g_2,\mu_2,q_2}$ yields that
$(M_1, g_1)$ and $(M_2, g_2)$ are isometric
and that $(\mu_1, q_1)$ and $(\mu_2, q_2)$ are gauge equivalent.
\end{thm}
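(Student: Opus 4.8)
The plan is to reduce the time-fractional inverse problem to a well-understood *elliptic* (or spectral) inverse problem by exploiting the analyticity in time afforded by the Caputo derivative, and then to invoke the boundary spectral data machinery of the Belishev–Kurylev / boundary control method to recover the manifold and coefficients up to the natural gauge. Throughout I write $A_j := -\Delta_{g_j,\mu_j} + q_j$ for the (self-adjoint, nonnegative by the sign hypotheses on $q_j$) elliptic operator on $L^2(M_j, \mu_j^2\,dV_{g_j})$ with Dirichlet boundary condition, and I let $\{\lambda_{j,\ell}\}_\ell$, $\{\phi_{j,\ell}\}_\ell$ denote its eigenvalues and an orthonormal basis of eigenfunctions.

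\medskip

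\noindent\textbf{Step 1 (from the DN map to boundary spectral data).}
First I would take Laplace transforms in time. Applying $\int_0^\infty e^{-pt}(\cdot)\,dt$ to \eqref{eqM1} turns the Caputo derivative into multiplication by $p^\alpha$, so the Laplace transform $\hat u(p,\cdot)$ solves the resolvent-type elliptic problem $(A_{j} + p^\alpha)\hat u = 0$ in $M_j$ with $\hat u = \hat f$ on $\pd M_j$. Thus the equality of time-domain DN maps on $\mathcal H_{\mathrm{in},\alpha,T_0}$, combined with the density of the admissible boundary data and the analyticity of $p \mapsto \hat u(p,\cdot)$ on a half-plane, forces the \emph{elliptic} partial DN maps
\beq
\Lambda_{A_1}(z) = \Lambda_{A_2}(z), \quad z = p^\alpha,
\eeq
to coincide for all $z$ in a set with a limit point, hence for all $z$ outside the spectra. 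Here the subtlety is that the data are only partial (inputs on $\Gi$, measurements on $\Go$, and only at the single time $T_0$); I would handle the single-time restriction by noting that the map $f \mapsto \pd_\nu u(\cdot,\cdot)_{|\Go}$ as a function of time is, for $f$ supported in $(0,T_0)\times\Gi$, real-analytic in $t$ on $(T_0, T)$ by the Mittag-Leffler structure of the solution (established in Section 2), so knowledge at $t = T_0$ together with the governing equation propagates to all frequencies. Expanding the resolvent in terms of the eigenbasis, the meromorphic function $z \mapsto \Lambda_{A_j}(z)$ has poles at $z = -\lambda_{j,\ell}$ with residues built from the boundary traces $\pd_\nu \phi_{j,\ell}|_{\pd M}$; matching poles and residues yields the \emph{partial boundary spectral data}: the two operators share the same eigenvalues $\lambda_\ell := \lambda_{1,\ell} = \lambda_{2,\ell}$ and the same boundary traces $\pd_\nu\phi_{1,\ell}|_{\Go} = \pd_\nu\phi_{2,\ell}|_{\Go}$ (after fixing the basis compatibly on the eigenspaces).

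\medskip

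\noindent\textbf{Step 2 (gauge reduction to the Laplace–Beltrami setting).}
Because the hypotheses impose $g_1 = g_2$ and $\mu_1 = \mu_2 = 1$ with $\pd_\nu\mu_j = 0$ on the common boundary, I would first observe that in this theorem the conformal/weight ambiguity is already normalized at the boundary, so the remaining freedom is exactly the gauge \eqref{gauge_mu_q}. The standard move here is to conjugate $A_j$ by the weight: the substitution $v = \mu_j u$ transforms $\Delta_{g_j,\mu_j}$ into the plain Laplace–Beltrami operator $\Delta_{g_j}$ modified by a potential, i.e. $A_j$ is unitarily equivalent (via multiplication by $\mu_j$, which is $1$ with vanishing normal derivative on $\pd M$, hence preserves the boundary data) to a Schrödinger operator $-\Delta_{g_j} + \tilde q_j$ on $L^2(M_j, dV_{g_j})$. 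The conditions \eqref{kappa_near_bd} are precisely what guarantee that this conjugation does not alter the measured boundary spectral data, which is why the gauge class rather than the individual $(\mu,q)$ is the sharp invariant.

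\medskip

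\noindent\textbf{Step 3 (boundary control / Gel'fand reconstruction) and the main obstacle.}
With matching partial boundary spectral data for the Schrödinger operators $-\Delta_{g_j}+\tilde q_j$ in hand, I would invoke the boundary control method: the Blagoveščenskii-type identities reconstruct inner products of waves from the spectral data, the boundary distance functions and then the boundary normal (semigeodesic) coordinates are recovered, yielding an isometry $\Phi:(M_1,g_1)\to(M_2,g_2)$ fixing $\pd M$; under this isometry the potentials, and consequently the gauge-equivalence class of $(\mu_j,q_j)$, are identified. \textbf{The main obstacle is the partiality of the data:} the boundary control method in its classical form uses the full boundary, whereas here both input and measurement are confined to small relatively open sets $\Gi,\Go$ with only $\overline{\Gi}\cap\overline{\Go}\neq\emptyset$. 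To overcome this I expect to need the sharp unique continuation for the elliptic/wave operator (Tataru's theorem) to propagate controllability from the overlap region $\overline{\Gi}\cap\overline{\Go}$ inward, reconstructing the manifold locally near that patch and then continuing globally by a layer-stripping/continuation argument; ensuring the single common frequency datum at $t=T_0$ suffices to seed this, and verifying that the nonnegativity $q_j\ge 0$ rules out obstructing resonances, is where the real work lies.
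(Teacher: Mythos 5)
Your overall strategy --- reduce the time-fractional DN map to elliptic boundary spectral data via the Laplace/Mittag-Leffler structure, then invoke the boundary control method --- is exactly the paper's strategy, but two of your steps have genuine gaps as written. First, your handling of the single-time restriction fails: you propose that real-analyticity of $t \mapsto \pd_\nu u(t,\cdot)|_{\Go}$ on $(T_0,T)$ lets knowledge at $t=T_0$ ``propagate to all frequencies,'' but an analytic function is not determined by its value at one point, so this argument gives nothing. The correct mechanism (the one used in the proof of Theorem \ref{t2}, which carries over verbatim to the manifold setting) is to exploit the arbitrariness of the \emph{time profile of the input}: since the solution operator is a time convolution, taking $f(t,x)=g(t)h(x)$ with $g\in C_0^\infty(0,T_0)$ arbitrary, the observation at the single time $T_0$ is $\int_0^{T_0}s^{\alpha-1}F_{j,h}(s^\alpha,\cdot)\,g(T_0-s)\,\dd s$; varying $g$ de-convolves this identity and yields $F_{1,h}=F_{2,h}$ on $(0,T_0)$, after which one continues analytically in the spectral variable and takes a Laplace transform to read off poles and residues.

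Second, and more seriously, the residues of the partial DN map do \emph{not} give you the traces $\pd_\nu\phi_{j,\ell}|_{\Go}$, as you claim; they give only the bilinear kernels $\Theta_{j,n}(x,y)=\sum_p \psi_{j,n,p}(x)\psi_{j,n,p}(y)$ restricted to $\Go\times\Gi$ (this is the content of Theorem \ref{t4}). Your parenthetical ``after fixing the basis compatibly on the eigenspaces'' conceals precisely the step where the hypothesis $\overline{\Gi}\cap\overline{\Go}\neq\emptyset$ is used: passing from equality of these product kernels on $\Go\times\Gi$ to (gauge-equivalent) boundary spectral data on $\Go$ requires the argument of \cite[Theorem 4]{Lassas2010}, and even then one obtains only $\psi_{1,n,p}=\kappa\,\psi_{2,n,p}$ on $\Go$ for some constant $\kappa>0$, not equality. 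That this step cannot be skipped is demonstrated by Theorem \ref{t6}: when the closures are disjoint, the kernel data on $\Go\times\Gi$ is not known to suffice without an additional Hassell--Tao-type spectral hypothesis. Finally, your Step 3 defers the partial-data geometric reconstruction as ``where the real work lies'' rather than completing it; this is in fact known --- the inverse boundary spectral problem with data on part of the boundary was solved by Katchalov and Kurylev \cite{Katchalov1998}, and the paper concludes by combining \cite[Theorems 4.33 and 3.37]{Katchalov2001} --- so your outline becomes a proof once the two gaps above are repaired and the sketch in Step 3 is replaced by these citations.
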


\subsection{Comments}

Notice that the absence of global uniqueness result manifested in Theorems \ref{thm-main} (in the sense that only two of the three coefficients $\rho$, $a$, and $V$, are recovered) and \ref{t5} (where the metric $g$ is determined up to an isometry and $(\mu,q)$ are identified modulo gauge transformation) arises from one or several natural obstructions to identification in the system under investigation, each of them being induced by an invariance property satisfied by \eqref{eqM1}.

The first obstruction, which can be found both in Theorems \ref{thm-main} and \ref{t5}, is due to the invariance of \eqref{eqM1} under the group of gauge transformations given by 
\eqref{gauge_mu_q}. Indeed, given a strictly positive function $\kappa \in C^\infty(M)$ satisfying \eqref{kappa_near_bd}, we observe for any $(\mu_1,q_1)$ and $(\mu_2,q_2)$ obeying \eqref{gauge_mu_q}, that 
\begin{align*}
\Delta_{g, \mu_2} (\kappa w)
= 
\kappa \Delta_{g, \mu_1} w  + \delta \kappa w,\ w \in C^\infty(M),
\end{align*}
where $\delta := \kappa^{-1} \Delta_{g, \mu_1} \kappa
- 2 \kappa^{-2} (\nabla_g \kappa, \nabla_g \kappa)_g$, and $(\cdot, \cdot)_g$ denotes
the inner product on $(M,g)$.
In particular, taking $w = \kappa^{-1}$ we get 
the simpler expression
$\delta = - \kappa \Delta_{g, \mu_1} \kappa^{-1}$.
Finally, taking $w = u$, where $u$ is the solution to \eqref{eqM1} associated with $\mu=\mu_1$ and $q=q_1$, we find that
$$
(\pd_t^\alpha - \Delta_{g, \mu_2} +  q_2) (\kappa u) 
= \kappa(\pd_t^\alpha - \Delta_{g,\mu_1} + q_1)u = 0.
$$
Since our assumptions \eqref{kappa_near_bd} on $\kappa$ imply that $\pd_\nu (\kappa u) = \pd_\nu u$ and $\kappa u=u$ on $(0,T) \times \pd M$, we find that $\Lambda_{M,g,\mu_1,q_1} = \Lambda_{M,g,  \mu_2,  q_2}$.
This proves that the DN map is invariant under the group of gauge transformations $$(\mu, q) \mapsto ( \kappa^{-2}\mu, q - \kappa \Delta_{g, \mu} \kappa^{-1})$$ 
parametrized by strictly positive functions
$\kappa \in C^\infty(M)$ satisfying \eqref{kappa_near_bd}. 
Notice that the conditions $g_1 = g_2$, $\mu_1 = \mu_2=1$ and $\pd_\nu \mu_1 = \pd_\nu \mu_2=0$ imposed on $\pd M_1$ in 
Theorem \ref{t5} are analogous to \eqref{t1b} in Theorem \ref{thm-main}. Moreover, the above mentioned invariance property of the system indicates that
the result of Theorem \ref{thm-main}, where two of the three coefficients $\rho$, $a$, and $V$, are simultaneously identified while keeping the third one fixed, is the best one could expect.

The second obstruction arises from the fact that \eqref{eqM1}
is invariant with respect to changes of coordinates.
That is, if $\Phi : M \to M$ is a diffeomorphism fixing the boundary $\pd M$ then 
$\Lambda_{M,g,\mu,q} = \Lambda_{M,\Phi^* g,\mu,q}$
where $\Phi^* g$ is the pullback of $g$ by $\Phi$.

To our best knowledge, the results of this article are the most precise so far, about the recovery of coefficients appearing in a time fractional diffusion equation from boundary measurements. We prove recovery of a wide class of coefficients from partial boundary measurements that consist of an input on the part $\Gi$ of the boundary and observation of the flux at the part $\Go$ for one fixed time $t=T_0 \in (0,T)$.
Our results extend the ones contained in the previous works \cite{CNYY,CXY,HNWY,LIY,YZ} related to this problem. Another benefit of our approach is its generality, which makes it possible to treat the case of a smooth Riemannian manifold, and the one of a bounded domain with weak regularity assumptions on the coefficients.

Notice that \eqref{eq1} associated with $\alpha=1$ is the usual heat equation, in which case Theorem \ref{thm-main} is contained \cite{CK1,CK2}.
We point out that the strategy used in \cite{CK1,CK2} for the derivation of Theorem \ref{thm-main} with $\alpha=1$, cannot be adapted to the framework of time fractional derivative diffusion equations of order $\alpha \in (0,1) \cup (1,2)$. This is due to the facts that a solution to a time fractional derivative equation is not described by a semi-group, and that there is only limited smoothing property, and no integration by parts formula or Leibniz rule, with respect to the time variable, in this context. As a consequence, the analysis developped in this text is quite different from the one carried out by \cite{CK1,CK2}.

Notice from Theorem \ref{t5} that the statement of Theorem \ref{thm-main} still holds true for smooth coefficients in a smooth domain, under the weaker assumption $\overline{\Gi} \cap \overline{\Go} \ne \emptyset$. Nevertheless, in contrast to Theorem \ref{t5} where we focus on the recovery of the Riemaniann manifold and the metric, the main interest of Theorem \ref{thm-main} lies in the weak regularity assumptions imposed on the unknown coefficients of the inverse problem under consideration. In the same spirit, we point out with Theorem \ref{t6} below, that the result of Theorem \ref{t5} remains valid when $\overline{\Gi} \cap \overline{\Go} = \emptyset$, in the special case where $\mu=1$ and $q=0$, and assuming a Hassell-Tao type inequality \cite{Hassell2002}. 

The key idea to our proof is the connection between the DN map associated with \eqref{eqM1} and the boundary spectral data of the corresponding elliptic Schr\"odinger operator. This ingredient has already been used by several authors in the context of hyperbolic (see e.g. \cite{Katchalov2001, Katchalov2004, Lassas2010, LO, Lassas2014}), parabolic (see e.g. \cite{CK2, Katchalov2004}), and dynamical Schr\"odinger (see e.g. \cite{Katchalov2004}) equations. Nevertheless, to our best knowledge, there is no such approach for time fractional diffusion equations, available in the mathematical literature. Once the connection between the DN map and the boundary spectral data is established, we obtain Theorems \ref{thm-main} and \ref{t5} by applying a Borg-Levinson type inverse spectral result (see e.g. \cite{CK2,CS,Katchalov1998,Katchalov2001,KKS,Ki,NSU}).

\subsection{Outline} 
The paper is organized as follows. The next three sections are devoted to the study of the inverse problem associated with \eqref{eq1} in a bounded domain,
while the last section contains the analysis of the inverse problem associated with \eqref{eqM1} on a Riemannian manifold. 

More precisely, we establish a connection between Theorem \ref{thm-main} and a Borg-Levinson type inverse spectral result in the first part of Section \ref{sec-set}. In the second part, we introduce mathematical tools used in the analysis of the direct problem associated with \eqref{eq1}, which is carried out in Section \ref{sec-direct}. Then we define the partial DN map $\Lambda_{\rho,a,V}$ at the end of Section \ref{sec-direct}, and complete the proof of Theorem  \ref{thm-main} in Section \ref{sec-proof}. Finally, Section \ref{sec-Riemann}
contains the proofs of Theorem \ref{t5}, and the stronger result stated in Theorem \ref{t6} in the particular case where $\mu=1$ and $q=0$.


\section{The settings}
\label{sec-set}
In this section, we begin the analysis of the inverse problem associated with \eqref{eq1}, which is the purpose of Theorem \ref{thm-main}.
We first establish the connection between Theorem \ref{thm-main} and a suitable version of the Borg-Levinson inverse spectral theorem. 

\subsection{Borg-Levinson type inverse spectral problem and Theorem \ref{thm-main}}

Let us start by defining the boundary spectral data used by the Borg-Levinson type inverse spectral theory.

\noindent \textbf{Boundary spectral data.}
Given a positive constant $c$, we assume that $\rho \in L^{\infty}(\Omega)$ satisfies $\rho(x) \geq c >0$ for a.e. $x \in \Omega$, so the scalar product 
$$ \langle u , v \rangle_\rho :=\int_\Omega \rho(x) u(x) v(x) \dd x,\ u , v \in L^2(\Omega), $$ 
is equivalent to the usual one in $L^2(\Omega)$. We denote by $L_\rho^2(\Omega)$ the Hilbertian space $L^2(\Omega)$ endowed with $\langle \cdot , \cdot \rangle_\rho$.

Next, for a nonnegative $V \in L^{\infty}(\Omega)$, and for $a \in C^1(\overline{\Omega})$ fulfilling $a(x) \geq c > 0$ for every $x \in \Omega$, we introduce the quadratic form
$$
h[u] := \int_{\Omega} \left( a(x) | \nabla u(x) |^2 + V(x) | u(x) |^2 \right) \dd x,\ u \in \dom(h) := H_0^1(\Omega), 
$$
and consider the operator $H$ generated by $h$ in $L_\rho^2(\Omega)$. Since $\pd \Omega$ is $C^{1,1}$, $H$ is self-adjoint in $L_\rho^2(\Omega)$ and acts on its domain as
\bel{op-h} 
H u := \rho^{-1} \left( \mdiv{a \nabla u}  + V u \right),\ u \in \dom (H) := H_0^1(\Omega) \cap H^2(\Omega),
\ee
according to \cite[Theorem 2.2.2.3]{G}.

By the compactness of the embedding $H_0^1(\Omega) \hookrightarrow L_\rho^2(\Omega)$, the spectrum $\sigma(H)$ of the operator $H$ is purely discrete. Let $\{ \lambda_n;\, n \in \N^* \}$ be the non-decreasing sequence of the eigenvalues (repeated according to multiplicities) of $H$. Furthermore, we introduce a family $\{ \varphi_n;\ n \in \N^* \}$ of eigenfuctions of the operator $H$, which satisfy 
\bel{eq1b}
H \varphi_n = \lambda_n \varphi_n,\ n \in \N^*,
\ee
and form an orthonormal basis in $L_\rho^2(\Omega)$. Notice that each $\varphi_n$ is a solution to the following Dirichlet problem :
\bel{eq2}
\left\{ \begin{array}{rcll}
-\mdiv{a \nabla \varphi_n} + V \varphi_n &= & \lambda_n \rho \varphi_n, & \mbox{in}\ \Omega,\\ 
\varphi_n & = & 0, & \mbox{on}\ \partial\Omega,\\ 
\int_{\Omega} \rho(x) \abs{\varphi_n(x)}^2 \dd x &=& 1,\ & 
\end{array} 
\right.
\ee
Put $\psi_n := \left( a \partial_\nu {\varphi_n} \right)_{|\pd \Omega}$ for every $n \in \N^*$. Following \cite{CK1, CK2,Katchalov2001}, we define the boundary spectral data associated with $(\rho,a,V)$, as
$$ \mathrm{BSD}(\rho,a,V) :=\{ (\lambda_n, \psi_n );\ n\geq1\}. $$
\textbf{A strategy for the proof of Theorem \ref{thm-main}.}
We first recall from \cite[Corollaries 1.5, 1.6 and 1.7]{CK2} the following Borg-Levinson type theorem.

\begin{prop}
\label{p1}
Under the conditions of Theorem \ref{thm-main}, assume that either of the three assumptions (i), (ii) or (iii) is verified.
Then $\mathrm{BSD}(\rho_1,a_1,q_1)=\mathrm{BSD}(\rho_2,a_2,q_2)$ entails that $(\rho_1,a_1,q_1)=(\rho_2,a_2, q_2)$.
\end{prop}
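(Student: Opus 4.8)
This is a multidimensional Borg--Levinson theorem, so the plan is to prove it in three stages: first I would convert the boundary spectral data into knowledge of the Dirichlet-to-Neumann (DN) map of the associated stationary problem, then reduce the weighted divergence-form operator to a Schr\"odinger operator, and finally run a Calder\'on-type argument with complex geometric optics (CGO) solutions, using the dependence on the spectral parameter to disentangle the coefficients.

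For the first stage I would fix $\lambda \in \C \setminus \sigma(H)$ and, for a boundary datum $f$, let $u = u_\lambda$ solve $-\mdiv{a \nabla u} + V u = \lambda \rho u$ in $\Omega$ with $u = f$ on $\pd \Omega$, setting $\mathcal N(\lambda) f := (a \pd_\nu u)_{\vert \pd\Omega}$. Writing $u = w + v$ with $w$ a fixed $\lambda$-independent extension of $f$ and $v \in H_0^1(\Omega)$, expanding $v$ in the eigenbasis $\{ \varphi_n \}$ from \eqref{eq1b}, and integrating by parts, one obtains the meromorphic representation
\[
\mathcal N(\lambda) f - \mathcal N(\lambda_0) f = \sum_{n \geq 1} \left( \frac{1}{\lambda_n - \lambda_0} - \frac{1}{\lambda_n - \lambda} \right) \langle f, \psi_n \rangle_{L^2(\pd\Omega)}\, \psi_n,
\]
whose poles and residues are exactly the elements of $\mathrm{BSD}(\rho,a,V)$. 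Hence the hypothesis $\mathrm{BSD}(\rho_1,a_1,V_1) = \mathrm{BSD}(\rho_2,a_2,V_2)$ gives $\mathcal N_1(\lambda) - \mathcal N_1(\lambda_0) = \mathcal N_2(\lambda) - \mathcal N_2(\lambda_0)$ for all admissible $\lambda, \lambda_0$, and a limiting argument as $\lambda_0 \to -\infty$, where $\mathcal N(\lambda_0)$ is controlled by the boundary jets of the coefficients that coincide under \eqref{t1c}--\eqref{t1b}, would upgrade this to $\mathcal N_1(\lambda) = \mathcal N_2(\lambda)$ for every $\lambda$.

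For the second stage, since $a$ is scalar here, I would apply the Liouville substitution $v = a^{1/2} u$, which turns $-\mdiv{a \nabla u} + V u - \lambda \rho u$ into $a^{1/2}(-\Delta + Q_\lambda) v$ with
\[
Q_\lambda = a^{-1/2} \Delta(a^{1/2}) + a^{-1} V - \lambda\, a^{-1}\rho \in L^\infty(\Omega),
\]
the membership being guaranteed by $a \in W^{2,\infty}(\Omega)$ and $a \geq c > 0$. The conditions \eqref{t1c} on $a$ and \eqref{t1b} on $\rho$ ensure that the Cauchy data of the transformed solutions match on $\pd\Omega$, so $\mathcal N_1(\lambda) = \mathcal N_2(\lambda)$ is inherited by the Schr\"odinger DN maps attached to $Q_{1,\lambda}$ and $Q_{2,\lambda}$. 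Then the standard integral identity gives $\int_\Omega (Q_{1,\lambda} - Q_{2,\lambda})\, v_1 v_2 \, \dd x = 0$ for all $H^1$ solutions $v_j$ of $(-\Delta + Q_{j,\lambda}) v_j = 0$, and feeding in CGO solutions $v_j = e^{\zeta_j \cdot x}(1 + r_j)$ with $\zeta_1 + \zeta_2 = i\xi$ and $\abs{\zeta_j} \to \infty$ forces $\widehat{Q_{1,\lambda} - Q_{2,\lambda}}(\xi) = 0$, whence $Q_{1,\lambda} = Q_{2,\lambda}$ for each fixed $\lambda$. Since $Q_\lambda$ is affine in $\lambda$, matching the coefficient of $\lambda$ yields $a_1^{-1}\rho_1 = a_2^{-1}\rho_2$ and matching the remaining part yields $a_1^{-1/2}\Delta(a_1^{1/2}) + a_1^{-1} V_1 = a_2^{-1/2}\Delta(a_2^{1/2}) + a_2^{-1} V_2$. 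In case (i) the common $\rho$ forces $a_1 = a_2$, and then $V_1 = V_2$; in case (ii) the common $a$ forces $\rho_1 = \rho_2$, and then $V_1 = V_2$; in case (iii) the two relations, together with \eqref{t1c}--\eqref{t1b}, pin down the conformal factor $a_1/a_2 = \rho_1/\rho_2$ to be $1$, giving $a_1 = a_2$ and $\rho_1 = \rho_2$.

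The hard part will be the simultaneous recovery of two coefficients in case (iii): the divergence coefficient $a$ and the weight $\rho$ are coupled only through the ratio $a_1/a_2 = \rho_1/\rho_2$, which is precisely the conformal gauge of the problem, and eliminating it requires turning the $\lambda$-independent relation into a uniqueness statement for an elliptic equation with the boundary data supplied by \eqref{t1c}--\eqref{t1b}. A second delicate point, to be handled carefully under the weak regularity $a \in W^{2,\infty}$, $\rho, V \in L^\infty$, is the rigorous passage from the pole/residue data of $\mathcal N(\lambda)$ to the full DN map in the first stage, together with the validity of the CGO construction for merely bounded potentials.
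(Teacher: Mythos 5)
Your proposal addresses a statement that the paper itself does not prove: Proposition \ref{p1} is quoted directly from \cite[Corollaries 1.5, 1.6 and 1.7]{CK2}, and the paper's contribution is only to reduce Theorem \ref{thm-main} to it via Theorem \ref{t2}. So the comparison is really with the proof in that reference, and your skeleton --- boundary spectral data determines the elliptic DN maps $\mathcal{N}_j(\lambda)$ up to a $\lambda$-independent error, a $\lambda_0 \to -\infty$ limit removes the error, the Liouville substitution $v = a^{1/2}u$ produces a Schr\"odinger potential affine in $\lambda$, and matching powers of $\lambda$ disentangles the coefficients --- is indeed the classical Borg--Levinson strategy (going back to \cite{NSU} and \cite{I}) on which \cite{CK2} is built. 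The meromorphic representation of $\mathcal{N}(\lambda)-\mathcal{N}(\lambda_0)$ you write down is correct, as is your computation of $Q_\lambda$.

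As a proof, however, the sketch has three genuine gaps. First, case (iii) --- the only case in which both $a$ and $\rho$ vary, i.e.\ the actual two-coefficient content of the result --- is asserted rather than proven: matching powers of $\lambda$ only yields $a_1/a_2 = \rho_1/\rho_2 =: w^2$ together with equality of the $\lambda$-free parts, and eliminating this conformal factor is the hard step. One must observe that $w = (a_1/a_2)^{1/2}$ satisfies $\mdiv{a_2 \nabla w} = V\left(w - w^{-1}\right)$ in $\Omega$ with $w=1$ and $\pd_\nu w = 0$ on $\pd \Omega$ (the Cauchy data coming from \eqref{t1b} and \eqref{t1c}), then multiply by $w-1$ and integrate by parts: since $V \geq 0$ and $(w-w^{-1})(w-1) = (w+1)(w-1)^2/w \geq 0$, both resulting terms have a sign, forcing $\nabla w \equiv 0$ and hence $w \equiv 1$. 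This is exactly where the hypothesis $V \geq 0$ in \eqref{t1a} enters, and nothing of the sort appears in your argument --- you explicitly defer it. Second, your fixed-energy CGO step fails when $d=2$, although the statement covers $d \geq 2$: in two dimensions there are no complex vectors $\zeta_1, \zeta_2$ with $\zeta_j \cdot \zeta_j = 0$, $\zeta_1 + \zeta_2 = i\xi$ and $\abs{\zeta_j} \to \infty$, so density of products of fixed-energy solutions is unavailable; the literature instead uses $\lambda$ itself as the large parameter in the phases, which here is not routine because $\lambda$ multiplies the unknown weight $a^{-1}\rho$. Third, transferring the Cauchy data through the Liouville substitution requires $a_1 = a_2$ on $\pd\Omega$, which is neither assumed (condition \eqref{t1c} concerns $\nabla a_j$ only) nor automatic; it must itself be extracted from the data, e.g.\ from the $\lambda$-independence of $\mathcal{N}_1(\lambda) - \mathcal{N}_2(\lambda)$ together with the leading asymptotics $\mathcal{N}_j(-\tau^2) \sim \tau \sqrt{a_j \rho_j}$, and the same boundary identification is needed before the $\lambda_0 \to -\infty$ limit you invoke can yield $\mathcal{N}_1(\lambda) = \mathcal{N}_2(\lambda)$. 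You flag the second and third issues as ``delicate points'', but they are precisely where the work lies, so the proposal is an accurate roadmap rather than a proof.
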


In view of the inverse spectral result stated in Proposition \ref{p1}, we may derive the claim of Theorem \ref{thm-main} upon showing that two sets of admissible coefficients $(\rho_j,a_j,V_j)$, $j=1,2$, have same boundary spectral data, provided their boundary operators $\Lambda_{\rho_j,a_j,V_j}$ coincide. Otherwise stated, the proof of Theorem \ref{thm-main} is a byproduct of Proposition \ref{p1} combined with the coming result :

\begin{thm}
\label{t2} 
For $j=1,2$, let $V_j \in L^\infty(\Omega)$, $\rho_j \in L^\infty(\Omega)$ and $a_j \in \mathcal C^{1}(\overline{\Omega})$ satisfy \eqref{t1a} with $\rho=\rho_j$, $a=a_j$, $V=V_j$. Then $\Lambda_{\rho_1,a_1,V_1}= \Lambda_{\rho_2,a_2,V_2}$ implies
$\mbox{BSD}(\rho_1,a_1,V_1)=\mbox{BSD}(\rho_2,a_2,V_2)$, up to an appropriate choice of the eigenfunctions of the operator $H_1$ defined in \eqref{op-h} and associated with $(\rho,a,V)=(\rho_1,a_1,V_1)$.
\end{thm}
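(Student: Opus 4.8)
The plan is to express the fixed-time boundary flux $\Lambda_{\rho,a,V}f$ in terms of the boundary spectral data and Mittag--Leffler functions, and then to invert this representation. First I would reduce to separated inputs $f(t,x)=\theta(t)b(x)$ with $\supp \theta\subset(0,T_0)$ and $b$ supported in $\Gi$, and Laplace transform in $t$. Because the Caputo derivative with vanishing initial data transforms into $p^\alpha\hat u$, the transformed equation becomes the resolvent problem $(H+p^\alpha)\hat u(p)=0$ in $\Omega$ with $\hat u(p)=\hat f(p)$ on $\pd\Omega$, where $H$ is the operator of \eqref{op-h}. Expanding $\hat u(p)$ on the eigenbasis $\{\varphi_n\}$ and integrating by parts twice via Green's formula (using $\varphi_n|_{\pd\Omega}=0$ and $-\mdiv{a\nabla\varphi_n}+V\varphi_n=\lambda_n\rho\varphi_n$) produces the representation $\hat u(p)=-\sum_n(\lambda_n+p^\alpha)^{-1}\langle\hat f(p),\psi_n\rangle_{\pd\Omega}\,\varphi_n$, with $\psi_n=(a\pd_\nu\varphi_n)|_{\pd\Omega}$. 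Since $(\lambda_n+p^\alpha)^{-1}$ is the Laplace transform of $h_n(t):=t^{\alpha-1}E_{\alpha,\alpha}(-\lambda_n t^\alpha)$, inverting the transform and applying $a\pd_\nu(\cdot)|_{\Go}$ yields, using $\supp b\subset\Gi$,
\[
\Lambda_{\rho,a,V}(\theta b)=-\sum_n\langle b,\psi_n\rangle_{\Gi}\,(\theta*h_n)(T_0)\,\psi_n|_{\Go}.
\]

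Next I would vary the time profile to convert this into spectral information. Pairing with $c\in L^2(\Go)$ and writing $\beta_n:=\langle b,\psi_n\rangle_{\Gi}\langle\psi_n,c\rangle_{\Go}$, the scalar quantity $\langle\Lambda_{\rho,a,V}(\theta b),c\rangle_{\Go}=-\sum_n\beta_n(\theta*h_n)(T_0)$ equals, after the substitution $\eta(s):=\theta(T_0-s)$, the pairing $-\int_0^{T_0}\eta(s)\big(\sum_n\beta_n h_n(s)\big)\dd s$. Letting $\eta$ range over $C_c^\infty(0,T_0)$ therefore determines the function $s\mapsto\sum_n\beta_n h_n(s)$ on $(0,T_0)$. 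This function is real-analytic on $(0,\infty)$ (each $h_n$ is, and the series converges locally uniformly away from the origin by the regularity estimates of Section \ref{sec-direct}), hence it is determined on all of $(0,\infty)$; taking its Laplace transform converts it into the meromorphic function $\zeta\mapsto\sum_n\beta_n(\zeta+\lambda_n)^{-1}$ of the variable $\zeta=p^\alpha$.

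From $\Lambda_{\rho_1,a_1,V_1}=\Lambda_{\rho_2,a_2,V_2}$ these meromorphic functions coincide for all $b,c$, so their poles and residues agree. The poles sit at $\zeta=-\lambda_n$, whence $\{\lambda_n^{(1)}\}=\{\lambda_n^{(2)}\}$ with multiplicities; the residue at a common eigenvalue $\lambda$ gives the operator identity $\sum_{\lambda_n=\lambda}\langle\cdot,\psi_n^{(1)}\rangle_{\Gi}\psi_n^{(1)}|_{\Go}=\sum_{\lambda_n=\lambda}\langle\cdot,\psi_n^{(2)}\rangle_{\Gi}\psi_n^{(2)}|_{\Go}$ on $L^2(\Gi)$. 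Restricting inputs and outputs to the overlap $W:=\Gi\cap\Go$ (open and nonempty) and using that the traces $\{\psi_n|_W\}$ within an eigenspace are linearly independent (a combination vanishing on $W$ would be an eigenfunction with vanishing Cauchy data on the open set $W$, hence zero by unique continuation), I obtain an orthogonal transformation on each eigenspace relating $\{\psi_n^{(1)}|_W\}$ to $\{\psi_n^{(2)}|_W\}$. Replacing the eigenbasis of $H_1$ by this rotation forces $\psi_n^{(1)}=\psi_n^{(2)}$ on $W$; feeding this back into the full residue identity (first with inputs supported in $W$ to match traces on $\Go$, then with arbitrary inputs to match traces on $\Gi$) and invoking $\Gi\cup\Go=\pd\Omega$ propagates the equality to all of $\pd\Omega$. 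Thus $\mathrm{BSD}(\rho_1,a_1,V_1)=\mathrm{BSD}(\rho_2,a_2,V_2)$ for this choice of eigenfunctions.

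I expect the main obstacle to be the passage from the single fixed-time, partial-boundary measurement to the meromorphic spectral function: namely, justifying the convergence and analyticity of $\sum_n\beta_n h_n(s)$ for $s>0$, which rests on the smoothing and regularity properties of the fractional equation from Section \ref{sec-direct} together with the asymptotics of $E_{\alpha,\alpha}$ (which differ for $\alpha\in(0,1)$ and $\alpha\in(1,2)$), and the legitimacy of the analytic continuation in $t$ followed by the Laplace transform. By contrast, the inverse-spectral bookkeeping, matching poles and residues and running the unique-continuation/overlap argument, is comparatively routine once the open overlap $W$ is available.
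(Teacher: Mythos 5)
Your architecture coincides with the paper's proof of Theorem \ref{t2}: the eigenfunction/Mittag--Leffler representation of the fixed-time flux (the paper's Lemma \ref{l1} and Proposition \ref{pr-rf}), reduction to separated inputs $g(t)h(x)$, analytic continuation of the resulting identity from $(0,T_0)$ to $(0,\infty)$ (Lemma \ref{l3}), a Laplace transform turning it into a meromorphic identity, pole/residue matching by induction on $n$, and the final overlap/unique-continuation argument on $\Gi\cap\Go$ (which the paper outsources to \cite[pp. 975--976]{CK1}). The genuine gap is the one step you deferred to ``smoothing and regularity properties'': the passage from $G(s):=\sum_n \beta_n h_n(s)$, known on $(0,\infty)$, to the function $\zeta\mapsto\sum_n\beta_n(\zeta+\lambda_n)^{-1}$ by ``taking its Laplace transform''. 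First, $G$ need not be Laplace transformable at all: the only bound near $s=0$ available from \eqref{es2} and Lemma \ref{lm-elliptic} (after pairing the $L^2(\Go)$-valued series with $c$) is $|G(s)|\le Cs^{-\alpha-1}$, which is non-integrable, and no cancellation can be expected since $s^{1-\alpha}G(s)$ formally tends to $\Gamma(\alpha)^{-1}\sum_n\beta_n$, a divergent series. Second, even formally, the interchange $\cL\left[\sum_n\beta_n h_n\right]=\sum_n\beta_n\cL[h_n]$ lacks the required domination: $\int_0^\infty e^{-ps}|h_n(s)|\,\dd s$ is of order $\lambda_n^{-1}$, whereas Lemma \ref{lm-elliptic} controls $\langle b,\psi_n\rangle$ only after a $\lambda_n^{-1}$ weighting in $\ell^2$, and $\|\psi_n\|_{L^2(\pd\Omega)}$ grows with $n$ (generically like $\lambda_n^{1/2}$, cf.\ the Hassell--Tao inequality \eqref{Hassell_Tao}); hence $\sum_n|\beta_n|\lambda_n^{-1}$ can diverge.

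The paper's resolution of exactly this point is the insertion of the weight $t^{\alpha+1}$ (an extra $t^2$ relative to your $h_n$) before transforming, in Step 3 of Section \ref{sec-proof}: the partial sums of $\sum_n t^{\alpha+1}E_{\alpha,\alpha}(-\lambda_{j,n}t^\alpha)\int_{\Gi}\Theta_{j,n}(\cdot,y)h(y)\,\dd\sigma(y)$ are bounded in $L^2(\Go)$ by $Ct^{1-\alpha}\|h\|_{H^{3/2}(\pd\Omega)}$ uniformly in the truncation, an integrable envelope, so dominated convergence for $L^2(\Go)$-valued integrals legitimizes the term-by-term transform. The price is that $\cL\left[t^{\alpha+1}E_{\alpha,\alpha}(-\lambda t^\alpha)\right](p)$ equals the second $p$-derivative of $(p^\alpha+\lambda)^{-1}$ (formula \eqref{lap}), so the meromorphic function of $z=p^\alpha$ in \eqref{t2c}--\eqref{t2d} has poles of order three at $z=-\lambda_{j,n}$, not the simple poles you posit. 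Your residue bookkeeping survives this change verbatim (multiply by $(z+\lambda)^3$ rather than $(z+\lambda)$ and pass to the limit, as in Step 4), and your endgame on $W=\Gi\cap\Go$ is the Canuto--Kavian argument the paper cites. So the strategy is the right one, but the single analytic step you treated as routine is precisely where an additional idea---the $t^{\alpha+1}$ regularization or an equivalent one---is indispensable; as written, your transform step is undefined and its asserted outcome is incorrect in form.
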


Therefore, we are left with the task of proving Theorem \ref{t2}. 

\subsection{Technical tools}
\ \\
\textbf{Fractional powers of $H$.}
Since $H$ is a strictly positive operator, for all $s\geq 0$, we can define $H^s$  by 
\[H^s h=\sum_{n=1}^{+\infty}\left\langle h,\phi_n\right\rangle \lambda_n^s\phi_n,\quad h\in D(H^s)=\left\{h\in L^2(\Omega):\ \sum_{n=1}^{+\infty}\abs{\left\langle h,\phi_n\right\rangle}^2 \lambda_n^{2s}<\infty\right\}\]
and we consider on $D(H^s)$ the norm
\[\|h\|_{D(H^s)}=\left(\sum_{n=1}^{+\infty}\abs{\left\langle h,\phi_n\right\rangle}^2 \lambda_n^{2s}\right)^{\frac{1}{2}},\quad h\in D(H^s).\]
\textbf{Two parameters Mittag-Leffler function.}
Let $\alpha$ and $\beta$ be two positive real numbers. Following \cite[Section 1.2.1, Eq. (1.56)]{P}, we define the Mittag-Leffler function associated with $\alpha$ and $\beta$, by the series expansion
\bel{def-MLf}
E_{\alpha,\beta}(z)=\sum_{n=0}^{+\infty} \frac{z^n}{\Gamma(\alpha n + \beta)},\ z \in \C.
\ee
In the particular framework of this paper, where $\alpha \in (0,2)$, we recall for further reference from \cite[Theorem 1.4]{P} the three following useful estimates.

The first estimate, which holds for any $\alpha \in (0,2)$ and $\beta \in \R_+^*$, claims that there exists a constant $c>0$, depending only on $\alpha$ and $\beta$, such that we have
\bel{es0}
| E_{\alpha,\beta}(-t) | \leq \frac{c}{1+ t},\ t \in \R_+^*.
\ee

The second estimate applies for $\alpha=\beta \in (0,2)$ and states for every $\theta \in (\pi \alpha \slash 2, \pi \alpha)$, that
\bel{es1}
| E_{\alpha,\alpha}(z) | \leq \frac{c}{1+ |z|^{2}},
\ee
whenever $z \in \C \setminus \{ 0 \}$ satisfies $\ | \mbox{arg}(z) | \in [\theta, \pi]$. Here $c$ is a positive constant depending only on $\alpha$ and $\theta$.  In contrast to  \eqref{es0}, which is explicitly stated at formula (1.148) of \cite[Theorem 1.6]{P}, estimate \eqref{es1} follows from the asymptotic behavior of $E_{\alpha,\alpha}(z)$ as $|z|\to+\infty$ given by formula (1.143) of \cite[Theorem 1.4]{P}. Indeed, formula (1.143) of \cite[Theorem 1.4]{P} implies that for $\ | \mbox{arg}(z) | \in [\theta, \pi]$ we have
\[E_{\alpha,\beta}(z)= -{z^{-1}\over \Gamma(\alpha-\beta)}+\underset{|z|\to+\infty}{\mathcal O}\left(|z|^{-2}\right)\]
and using the fact that ${z^{-1}\over \Gamma(\alpha-\beta)}=0$ for $\alpha=\beta$ we deduce \eqref{es1}.

Finally, the third estimate we shall need in the derivation of Theorem \ref{thm-main}, follows readily from \eqref{es1} and reads:
\bel{es2}
| E_{\alpha,\alpha}(-t) | \leq \frac{c}{1+ t^{2 }},\ t \in \R_+^*.
\ee
\textbf{An {\it a priori} elliptic estimate.} In what follows, we shall make use several times of the following result.

\begin{lem}
\label{lm-elliptic}
Let $\rho \in L^{\infty}(\Omega)$, $a \in C^1(\overline{\Omega})$ and $V \in L^{\infty}(\Omega)$ fulfill \eqref{t1a}. Then there exists a constant $c>0$, such that the estimate
\bel{ell1c}
\sum_{n=1}^{+\infty} \lambda_n^{-2} \left| \int_{\pd \Omega} g(x) \psi_n(x) \dd x \right|^2 \leq c \| g \|_{H^{3 / 2}(\pd \Omega)}^2, 
\ee
holds whenever $g \in H^{3 / 2}(\pd \Omega)$.
\end{lem}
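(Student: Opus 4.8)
\emph{The plan} is to prove \eqref{ell1c} by a duality argument resting on Green's formula, which turns the boundary pairing $\int_{\pd\Omega} g\psi_n$ into a spectral coefficient of a suitable elliptic lift of $g$; Parseval's identity then does the rest, and the weight $\lambda_n^{-2}$ will be absorbed exactly by a factor $\lambda_n$ produced along the way. First I would introduce the solution $u_g$ to the auxiliary boundary value problem
\[
-\mdiv{a \nabla u_g} + V u_g = 0 \ \mbox{in}\ \Omega, \qquad u_g = g \ \mbox{on}\ \pd\Omega .
\]
This is well posed: since $a \geq c > 0$ and $V \geq 0$, the quadratic form $h$ is positive, so $H$ is strictly positive and $0 \notin \sigma(H)$; hence the non-homogeneous Dirichlet problem with datum $g$ admits a unique solution. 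Lifting $g$ into $\Omega$ by the trace theorem and invoking the $H^2$-regularity statement \cite[Theorem 2.2.2.3]{G} (this is where the $C^{1,1}$-regularity of $\pd\Omega$ and $a \in C^1(\overline\Omega)$ are used), I obtain $u_g \in H^2(\Omega)$ together with the a priori bound $\| u_g \|_{H^2(\Omega)} \leq c \| g \|_{H^{3/2}(\pd\Omega)}$.

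Next I would apply Green's second identity to the pair $(u_g, \varphi_n)$ for the self-adjoint operator $u \mapsto \mdiv{a \nabla u}$. Since $\varphi_n = 0$ on $\pd\Omega$, only the normal derivative of $\varphi_n$ survives on the boundary, and because $\psi_n = (a \pd_\nu \varphi_n)_{|\pd\Omega}$ and $u_g = g$ there, this gives
\[
\int_{\pd\Omega} g \psi_n \, \dd x
= \int_\Omega \left[ u_g \, \mdiv{a \nabla \varphi_n} - \varphi_n \, \mdiv{a \nabla u_g} \right] \dd x .
\]
Substituting the two equations $\mdiv{a \nabla u_g} = V u_g$ and $\mdiv{a \nabla \varphi_n} = V \varphi_n - \lambda_n \rho \varphi_n$ (the latter from \eqref{eq2}), the potential terms cancel and I am left with the clean identity
\[
\int_{\pd\Omega} g \psi_n \, \dd x = -\lambda_n \int_\Omega \rho \, u_g \, \varphi_n \, \dd x = -\lambda_n \langle u_g, \varphi_n \rangle_\rho .
\]
The decisive feature is the factor $\lambda_n$ appearing here, which exactly compensates the weight $\lambda_n^{-2}$ in \eqref{ell1c}.

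Finally, taking moduli, squaring, and summing over $n$, the prefactors combine to $\lambda_n^{-2} \lambda_n^2 = 1$, so that by Parseval's identity in the orthonormal basis $\{ \varphi_n \}$ of $L_\rho^2(\Omega)$,
\[
\sum_{n=1}^{+\infty} \lambda_n^{-2} \left| \int_{\pd\Omega} g \psi_n \, \dd x \right|^2
= \sum_{n=1}^{+\infty} | \langle u_g, \varphi_n \rangle_\rho |^2
= \| u_g \|_{L_\rho^2(\Omega)}^2 .
\]
Bounding $\| u_g \|_{L_\rho^2(\Omega)}^2 \leq \| \rho \|_{L^\infty(\Omega)} \| u_g \|_{L^2(\Omega)}^2 \leq c \| g \|_{H^{3/2}(\pd\Omega)}^2$ via the elliptic estimate obtained in the first step yields \eqref{ell1c}. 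I expect the only genuinely delicate point to be the rigorous justification of Green's formula at the $H^2$ level (equivalently, the $H^2$-solvability and a priori bound for the auxiliary problem under the stated low regularity of the coefficients); the algebraic cancellation of $V$ and the Parseval step are then routine.
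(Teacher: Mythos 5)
Your proposal is correct and follows essentially the same route as the paper's own proof: both construct the $H^2$ solution of the homogeneous elliptic problem with Dirichlet datum $g$ (via a trace-theorem lift and the invertibility of $H$ together with Grisvard's $H^2$ regularity), derive the identity $\int_{\pd\Omega} g\,\psi_n\, \dd\sigma = -\lambda_n \langle v,\varphi_n\rangle_\rho$ by integrating by parts twice (your Green's second identity with cancellation of the $V$ terms is exactly this step), and conclude by Parseval's identity in $L^2_\rho(\Omega)$. No gaps; the argument is sound as written.
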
 
\begin{proof}
We first prove that the boundary value problem
\bel{ell1}
\left\{ \begin{array}{rcll} -\mdiv{a \nabla v}+ V v & = & 0, & \mbox{in}\ \Omega,\\  
v & =& g, & \mbox{in}\ \pd \Omega,
\end{array} \right.
\ee
admits a unique solution $v \in H^2(\Omega)$. To do that, we refer to \cite[Section 1, Theorem 8.3]{LM1} and pick $G \in H^2(\Omega)$ satisfying $G=g$ on $\pd \Omega$ and the estimate
\bel{ell2}
\| G \|_{H^2(\Omega)} \leq C \| g \|_{H^{3 \slash 2}(\pd \Omega)}.
\ee
Here and in the remaining of the proof $C$ denotes a positive constant that does not depend on $g$. Evidently $v$ is a solution to \eqref{ell1} if and only if $w=v-G$ is a solution to
\bel{ell3}
\left\{ \begin{array}{rcll} -\mdiv{a \nabla w}+ V w & = & F_G, & \mbox{in}\ \Omega,\\  
w & =& 0, & \mbox{in}\ \pd \Omega,
\end{array} \right.
\ee
where $F_G:=-\left( -\mdiv{a \nabla G}+ V G \right)$. Since $H$ is boundedly invertible in $L^2(\Omega)$ and $\rho^{-1} F_G \in L^2(\Omega)$, then 
$w = H^{-1} ( \rho^{-1} F_G ) \in \dom(H)=H_0^1(\Omega) \cap H^2(\Omega)$ is the unique solution to \eqref{ell3}, and we have
\bel{ell4}
\| w \|_{H^2(\Omega)} \leq C \| F_G \|_{L^2(\Omega)}.
\ee
Here we used the fact, arising from the strict ellipticity of $H$ 
(see \cite[Sections 2.2, 2.3, and 2.4]{G}), that the graph norm of $H$ is equivalent to the usual norm in $H^2(\Omega)$. Therefore, $v=w+G \in H^2(\Omega)$ is the unique solution to\eqref{ell1} and we derive from \eqref{ell2} and \eqref{ell4} that
\bel{ell5}
\| v \|_{H^2(\Omega)} \leq C \| g \|_{H^{3 / 2}(\pd \Omega)}.
\ee

Now, in view of \eqref{ell1}, we get for each $n \in \N^*$ that
$$ 
0 = \langle -\mdiv{a \nabla v}+ V v  , \varphi_n \rangle = \langle v, H \varphi_n \rangle_\rho + \int_{\pd \Omega} g(x) \psi_n(x) \dd x,
$$
upon integrating by parts twice. This and \eqref{eq1b} yield that
\bel{ell1b}
v_n:=\langle v , \varphi_n \rangle_\rho = - \lambda_n^{-1} \int_{\pd \Omega} g(x) \psi_n(x) \dd x,\ n \in \N^*. 
\ee
Finally, putting the Parseval identity $\sum_{n=1}^{+\infty} | v_n |^2=\| v \|_{\rho}^2$ together with \eqref{ell5}-\eqref{ell1b}, we obtain \eqref{ell1c}.
\end{proof}

\section{Analysis of the direct problem}
\label{sec-direct}
In this section we rigorously define the DN map \eqref{def-DN}, which requires that the direct problem associated with \eqref{eq1} be preliminarily examined. Next we relate the DN map \eqref{def-DN} to the BSD.

We start with the sub-diffusive case $\alpha \in (0,1)$.
\begin{prop}
\label{pr-solution}
Let $\alpha\in(0,1)$, $\rho \in L^{\infty}(\Omega)$, $a \in C^1(\overline{\Omega})$ and $V \in L^{\infty}(\Omega)$ fulfill \eqref{t1a}, and let
$f \in C^1([0,T]; H^{\frac{3}{2}}(\pd \Omega))$ satisfy $f(0,\cdot)=0$ in $\partial\Omega$. 
Then, there exists a unique solution $u \in C([0,T],L^{2}(\Omega))$ to the boundary value problem \eqref{eq1}. Moreover, we have $u \in C((0,T], H^{2\gamma}(\Omega))$ for any $\gamma \in (0,1)$.
\end{prop}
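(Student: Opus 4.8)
The plan is to reduce the non-homogeneous boundary value problem to an abstract fractional Cauchy problem with homogeneous Dirichlet condition and an $L^2$-valued source, and then to solve and estimate it through the spectral decomposition of the operator $H$ from \eqref{op-h} together with the Mittag-Leffler bounds \eqref{es0}--\eqref{es2}. First I would lift the boundary datum: invoking the trace theorem \cite[Section 1, Theorem 8.3]{LM1} at each fixed time yields $F$ with $F(t,\cdot)_{|\pd\Omega}=f(t,\cdot)$ and $\|F(t,\cdot)\|_{H^2(\Omega)}\leq C\|f(t,\cdot)\|_{H^{3/2}(\pd\Omega)}$, which inherits the $C^1$-in-time regularity of $f$ and obeys $F(0,\cdot)=0$ because $f(0,\cdot)=0$. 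Setting $u=v+F$, the remainder $v$ should solve, since $\alpha\in(0,1)$ forces $m=0$,
\[
\pd_t^\alpha v + H v = G := -\pd_t^\alpha F + \rho^{-1}\left(\mdiv{a \nabla F} - V F\right),\quad v_{|\pd\Omega}=0,\quad v(0,\cdot)=0.
\]
Here $\pd_t^\alpha F$ is a weakly singular fractional integral of $\pd_t F\in C([0,T];H^2(\Omega))$, hence $\pd_t^\alpha F\in C([0,T];H^2(\Omega))$ with $\pd_t^\alpha F(0,\cdot)=0$; combined with $a\in C^1(\ooo\Omega)$ and $\rho^{-1},V\in L^\infty(\Omega)$ this gives $G\in C([0,T];L^2(\Omega))$.

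Next I would construct $v$ modewise in the orthonormal basis $\{\varphi_n\}$ of eigenfunctions of $H$. Writing $v_n(t):=\langle v(t),\varphi_n\rangle_\rho$ and $G_n(t):=\langle G(t),\varphi_n\rangle_\rho$, each $v_n$ must solve the scalar fractional equation $\pd_t^\alpha v_n+\lambda_n v_n=G_n$ with $v_n(0)=0$, whose solution reads
\[
v_n(t)=\int_0^t (t-s)^{\alpha-1}E_{\alpha,\alpha}\!\left(-\lambda_n (t-s)^\alpha\right) G_n(s)\,\dd s,
\]
so I would set $v(t):=\sum_{n\geq1}v_n(t)\varphi_n$ and control it by the spectral calculus. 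Using \eqref{es0} in the form $|E_{\alpha,\alpha}(-\lambda_n\tau^\alpha)|\leq c(1+\lambda_n\tau^\alpha)^{-1}$ and maximizing $\lambda^\gamma(1+\lambda\tau^\alpha)^{-1}$ over $\lambda\geq0$ produces the operator norm estimate $\|H^\gamma E_{\alpha,\alpha}(-\tau^\alpha H)\|\leq c\,\tau^{-\alpha\gamma}$, whence
\[
\|v(t)\|_{D(H^\gamma)} \leq c\int_0^t (t-s)^{\alpha-1-\alpha\gamma}\|G(s)\|_{L^2(\Omega)}\,\dd s.
\]
The exponent $\alpha-1-\alpha\gamma$ exceeds $-1$ exactly when $\gamma<1$, so this integral converges for every $\gamma\in(0,1)$ and tends to $0$ as $t\to0$; the choice $\gamma=0$ gives $v\in C([0,T];L^2(\Omega))$ with $v(0,\cdot)=0$. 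By interpolation between $L^2(\Omega)$ and $\dom(H)=H_0^1(\Omega)\cap H^2(\Omega)$, and the equivalence of the graph norm of $H$ with $\|\cdot\|_{H^2(\Omega)}$, one has $D(H^\gamma)\hookrightarrow H^{2\gamma}(\Omega)$; together with $F\in C([0,T];H^2(\Omega))$ this yields $u=v+F\in C([0,T];L^2(\Omega))\cap C((0,T];H^{2\gamma}(\Omega))$, continuity into $D(H^\gamma)$ on $(0,T]$ following from dominated convergence applied to the convolution.

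For uniqueness, the difference $w$ of two solutions solves the homogeneous problem, and projecting onto $\varphi_n$ forces $w_n:=\langle w(t),\varphi_n\rangle_\rho$ to satisfy $\pd_t^\alpha w_n+\lambda_n w_n=0$ with $w_n(0)=0$, so $w_n\equiv0$ and hence $w\equiv0$.

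I expect the main obstacle to be twofold. First, giving rigorous meaning to the source term $\pd_t^\alpha F$ and to the identity $\pd_t^\alpha v+Hv=G$, since in the absence of a Leibniz rule and of classical time differentiability of the mild solution the equation must be read in the weak/transposition sense of \cite{KY}; this is where the assumption $f\in C^1([0,T];H^{3/2}(\pd\Omega))$ with $f(0,\cdot)=0$ is used essentially. Second, pinning down the exact smoothing threshold: the interplay between the time-singular kernel $(t-s)^{\alpha-1}$ and the Mittag-Leffler decay $\tau^{-\alpha\gamma}$ permits $\alpha(1-\gamma)$ orders of gain and no more, which is precisely what confines the spatial regularity of $u$ for $t>0$ to $H^{2\gamma}(\Omega)$ with $\gamma<1$.
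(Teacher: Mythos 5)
Your proposal is correct and follows essentially the same route as the paper: lift the Dirichlet datum by the Lions--Magenes trace theorem, reduce to a homogeneous Dirichlet problem with an $L^2(\Omega)$-valued source, solve via the eigenfunction expansion of $H$ with Mittag-Leffler kernels, and use the bound \eqref{es0} to obtain the $\tau^{-\alpha\gamma}$ smoothing rate together with the embedding $D(H^\gamma)\hookrightarrow H^{2\gamma}(\Omega)$. The only notable deviation is cosmetic: you choose the lifting so that $F(0,\cdot)=0$ (legitimate, since a fixed bounded linear extension operator applied to $f$ with $f(0,\cdot)=0$ does this), which eliminates the initial-value term $S_0(t)v_0$ that the paper keeps and estimates separately via \eqref{eu3e}--\eqref{eu6}.
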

\begin{proof}
With reference to \cite[Section 1, Theorem 8.3]{LM1} we pick $F \in C^1([0,T],H^2(\Omega))$ satisfying $F=f$ on $\Sigma$.
Then, it is apparent that $u$ is a solution to \eqref{eq1} if and only if $v:=u-F$ is a solution to the IBVP
\bel{eu2}
\left\{ \begin{array}{rcll} 
\rho \pd_t^\alpha v-\mdiv{a \nabla v}+ V v & = & G, & \mbox{in}\ Q,\\  
v & =& 0, & \mbox{on}\ \Sigma,\\  
v(0,\cdot) & = & v_0, & \mbox{in}\ \Omega,
\end{array} \right.
\ee
where $G:=-( \rho \pd_t^{\alpha} F - \nabla \cdot a \nabla F + V F)$ and $v_0:=-F(0,\cdot)$. Applying the Laplace transform to \eqref{eu2} we find through basic computations similar to the ones used in the derivation of \cite[Theorems 2.1 and 2.2]{SY}, that 
\bel{eu3}
v(t,\cdot) = S_0(t) v_0 + \int_0^t S(s) G(t-s,\cdot) \dd s,\ t \in (0,T), 
\ee
where we have set
\bel{eu3c}
S_0(t) h := \sum_{n=1}^{+\infty} E_{\alpha,1}(- \lambda_n t^{\alpha}) \langle h , \varphi_n \rangle \varphi_n\ \mbox{and}\
S(t) h := t^{\alpha-1} \sum_{n=1}^{+\infty} E_{\alpha,\alpha}(- \lambda_n t^{\alpha}) \langle h , \varphi_n \rangle \varphi_n,
\ee
for every $t \in (0,T)$ and $h \in L^2(\Omega)$. Further, in view of \eqref{es0}, we have
\bea
 \| S_0(t) h \|_{H^{2}(\Omega)}^2 &  = &  \sum_{n=1}^{+\infty} \lambda_n^{2} E_{\alpha,1}(-\lambda_n t^{\alpha})^2 \left| \langle h , \varphi_n \rangle \right|^2 \nonumber\\
& \leq & C \sum_{n=1}^{+\infty} t^{-2 \alpha} \left| \langle h , \varphi_n \rangle \right|^2 = C t^{-2 \alpha} \| h \|_{L^2(\Omega)}^2,  \label{eu3d}
\eea
where $C$ is a positive constant that is independent of $t$ and $h$. The convergence of the series appearing in the right hand side of \eqref{eu3d} being uniform with respect to $t \in [\varepsilon,T]$, for any fixed $\varepsilon \in (0,T)$, then $t \mapsto S_0(t) h \in C([\varepsilon,T],H^{2}(\Omega))$. And since $\varepsilon$ is arbitrary in $(0,T)$, we ned up getting that
\bel{eu3e}
t \mapsto S_0(t) h \in C((0,T],H^{2}(\Omega)).
\ee
Similarly, we obtain for all $t \in (0,T)$ and $h \in L^2(\Omega)$ that
\bea
\| S(t) h \|_{H^{2 \gamma}(\Omega)}^2 & = & t^{2(\alpha-1)} \sum_{n=1}^{+\infty} \lambda_n^{2 \gamma} E_{\alpha,\alpha}(-\lambda_n t^{\alpha})^2 \left| \langle h , \varphi_n \rangle \right|^2 \nonumber \\
& \leq & C \sum_{n=1}^{+\infty} t^{-2(1-\alpha (1-\gamma))} \left| \langle h , \varphi_n \rangle \right|^2 = C t^{-2(1-\alpha (1-\gamma))} \| h \|_{L^2(\Omega)}^2. \label{eu4}
\eea
As a consequence we have $t \mapsto \int_0^t S(s) G(t-s,\cdot) \dd s \in C([0,T],H^{2 \gamma}(\Omega))$, since $G \in C([0,T],L^2(\Omega))$. This, \eqref{eu3} and \eqref{eu3e} yield that $v$, and hence $u$, is lying in $C((0,T],H^{2 \gamma}(\Omega))$. 

We turn now to proving that $\lim_{t \to 0} \| u(t) \|_{L^2(\Omega)}=0$, or equivalently that $\lim_{t \to 0} \| v(t) - v_0 \|_{L^2(\Omega)}=0$.
With reference to \eqref{eu3}, we shall actually establish that 
\bel{eu5} 
\lim_{t \to 0} \| S_0(t) v_0 - v_0 \|_{L^2(\Omega)} = \lim_{t \to 0} \left\| \int_0^t S(s) G(t-s) \dd s \right\|_{L^2(\Omega)}=0.
\ee
This can be achieved upon recalling from \eqref{eu3c} that 
\bel{eu6} 
\| S_0(t) v_0 - v_0 \|_{L^2(\Omega)}^2 = \sum_{n=1} \left( E_{\alpha,1}(-\lambda_{n} t^{\alpha}) -1 \right)^2 | \langle v_0 , \varphi_n \rangle |^2,\ t \in (0,T),
\ee
noticing that $\underset{t \to 0}{\lim} \left( E_{\alpha,1}(-\lambda_{n} t^{\alpha}) -1 \right) =0$ for every $n \in \N^*$,
and taking advantage of the fact that the series in the right hand side of \eqref{eu6} convergences uniformly with respect to $t \in (0,T)$, as we have
$$  | E_{\alpha,1}(-\lambda_{n} t^{\alpha}) -1 | \leq \frac{c}{1 + \lambda_{n} t^{\alpha}} + 1 \leq c + 1,\ t \in (0,T),\ n \in \N^*, $$
by \eqref{es0}. Further, since
\beas
\left\| \int_0^t S(s) G(t-s) \dd s \right\|_{L^2(\Omega)} & \leq & \int_0^t \| S(t) G(t-s) \|_{H^{2 \gamma}(\Omega)} \dd s \\
& \leq & C \int_0^t s^{-1+\alpha(1-\gamma)} \| G(t-s,\cdot) \|_{L^2(\Omega)} \dd s \\
& \leq & ( C \slash \alpha(1-\gamma) ) t^{\alpha(1-\gamma)} \| G \|_{C([0,T],L^2(\Omega))},\ t \in (0,T),
\eeas
by \eqref{eu4}, we end up getting \eqref{eu5}. This terminates the proof since $v$, and hence $u$, is uniquely defined by \eqref{eu3}.
\end{proof}
In view of Proposition \ref{pr-solution}, for $\alpha\in(0,1)$ and for any $f \in C^1([0,T], H^{\frac{3}{2}}(\pd \Omega))$ such that $f(0,\cdot)=0$ on $\pd \Omega$ and all $\gamma \in (0,1)$, there exists
a unique solution $u \in C([0,T], L^2(\Omega)) \cap C((0,T], H^{2\gamma}(\Omega))$ to \eqref{eq1}. Thus, taking $\gamma \in (3 \slash 4,1)$, we see that the mapping
$$ a \pd_\nu u:\ [0,T] \times \pd \Omega \ni (t,x)  \mapsto  a(x) \pd_\nu u(t,x):=a(x) \nabla u(t,x) \cdot \nu(x), $$
where $\nu$ denotes the outward unit normal vector to $\pd \Omega$, is well defined in $C((0,T],L^2(\partial\Omega))$. From this result we deduce for all $\alpha\in(0,1)$ that the operator $\Lambda_{\rho,a,V}$ is bounded from 
$\mathcal H_{\mathrm{in},\alpha,T_0}$ into $L^2(\Go)$.

Further, arguing as above, we derive the following result in the super-diffusive case $\alpha \in (1,2)$.
\begin{prop}
\label{pr-solution-wave}
Let $\alpha\in(1,2)$, $\rho \in L^{\infty}(\Omega)$, $a \in C^1(\overline{\Omega})$ and $q \in L^{\infty}(\Omega)$ fulfill \eqref{t1a}, and let
$f \in C^2([0,T]; H^{\frac{3}{2}}(\pd \Omega))$ satisfy $f(0,\cdot)=\partial_tf(0,\cdot)=0$ in $\partial\Omega$. 
Then, there exists a unique solution $u \in C([0,T],L^{2}(\Omega))$ to the boundary value problem \eqref{eq1}. Moreover, we have $u \in C((0,T], H^{2\gamma}(\Omega))$ for any $\gamma \in (0,1)$.\end{prop}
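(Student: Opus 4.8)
The plan is to follow the architecture of the proof of Proposition~\ref{pr-solution} almost verbatim, the only genuinely new feature being the presence of a second initial datum forced by $m=[\alpha]=1$ when $\alpha\in(1,2)$. First I would invoke \cite[Section 1, Theorem 8.3]{LM1} to lift the boundary datum to some $F\in C^2([0,T],H^2(\Omega))$ with $F=f$ on $\Sigma$. Setting $v:=u-F$, problem \eqref{eq1} becomes an IBVP with homogeneous Dirichlet condition, source $G:=-(\rho\pd_t^\alpha F-\mdiv{a\nabla F}+VF)\in C([0,T],L^2(\Omega))$, and now \emph{two} initial conditions $v(0,\cdot)=v_0:=-F(0,\cdot)$ and $\pd_t v(0,\cdot)=v_1:=-\pd_t F(0,\cdot)$, both lying in $H^2(\Omega)$.

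Second, applying the Laplace transform mode by mode as in the derivation of \cite[Theorems 2.1 and 2.2]{SY}, and using that for $\alpha\in(1,2)$ the transform of the Caputo derivative is $p^\alpha\widehat{v}_n-p^{\alpha-1}v_{0,n}-p^{\alpha-2}v_{1,n}$, I expect the representation
\[
v(t,\cdot)=S_0(t)v_0+S_1(t)v_1+\int_0^t S(s)G(t-s,\cdot)\,\dd s,
\]
where $S_0$ and $S$ are exactly the operators \eqref{eu3c}, and the new term comes from inverting $p^{\alpha-2}/(p^\alpha+\lambda_n)$, which yields $S_1(t)h:=t\sum_{n=1}^{+\infty}E_{\alpha,2}(-\lambda_n t^\alpha)\langle h,\varphi_n\rangle\varphi_n$.

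Third, the estimates \eqref{eu3d} for $S_0$ and \eqref{eu4} for the source term carry over unchanged, since $\alpha-1>0$ only improves the powers of $t$. For the new term, because \eqref{es0} holds for every $\beta\in\R_+^*$, in particular $\beta=2$, the interpolation inequality $\lambda_n^{\gamma}(1+\lambda_n t^\alpha)^{-1}\leq t^{-\alpha\gamma}$ (valid for $\gamma\in[0,1]$) gives $\|S_1(t)h\|_{H^{2\gamma}(\Omega)}^2\leq Ct^{2(1-\alpha\gamma)}\|h\|_{L^2(\Omega)}^2$. Uniform convergence on $[\varepsilon,T]$ then yields $t\mapsto S_1(t)v_1\in C((0,T],H^{2\gamma}(\Omega))$, while the choice $\gamma=0$ gives $\|S_1(t)v_1\|_{L^2(\Omega)}\leq Ct\|v_1\|_{L^2(\Omega)}\to0$. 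Assembling the three contributions exactly as in Proposition~\ref{pr-solution} produces $v\in C((0,T],H^{2\gamma}(\Omega))$ for all $\gamma\in(0,1)$ together with $\lim_{t\to0}\|v(t)-v_0\|_{L^2(\Omega)}=0$, so that $u=v+F$ has the asserted regularity, and uniqueness follows from the representation formula.

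The delicate point, and the only place where the argument departs from the $\alpha\in(0,1)$ case, is checking that the representation reproduces \emph{both} initial conditions $u(0,\cdot)=0$ and $\pd_t u(0,\cdot)=0$. The first is immediate from $E_{\alpha,1}(0)=1$, $S_1(0)=0$, and the vanishing of the source integral at $t=0$; the second rests on the identity $\frac{\dd}{\dd t}\big(t E_{\alpha,2}(-\lambda_n t^\alpha)\big)\big|_{t=0}=1$, which holds because $1+\alpha>2$, ensuring $\pd_t v(0,\cdot)=v_1$ and therefore $\pd_t u(0,\cdot)=0$. Everything else is a routine transcription of the sub-diffusive argument.
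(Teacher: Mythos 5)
Your proposal is correct and follows exactly the route the paper intends: the paper gives no separate proof for the super-diffusive case, stating only that one argues as in Proposition \ref{pr-solution}, and your adaptation (the lifting, the mode-by-mode Laplace transform with the extra $p^{\alpha-2}v_{1,n}$ term, the resulting operator $S_1(t)h = t\sum_n E_{\alpha,2}(-\lambda_n t^\alpha)\langle h,\varphi_n\rangle\varphi_n$, and the estimates via \eqref{es0} with $\beta=2$) is precisely that argument, consistent with the representation in \cite[Theorem 2.3]{SY}. The details you supply, including the verification of the second initial condition, are sound.
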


Fix $\alpha \in (1,2)$. We deduce from Proposition \ref{pr-solution-wave} that for all $\gamma \in (3/4,1)$ and all $f \in C^2([0,T], H^{\frac{3}{2}}(\pd \Omega))$ verifying $f(0,\cdot)=\pd_t f(0,\cdot)=0$ on $\pd \Omega$, there exists
a unique solution $u \in C([0,T], L^2(\Omega)) \cap C((0,T], H^{2\gamma}(\Omega))$ to \eqref{eq1}. Therefore, the mapping
$$ a \pd_\nu u:\ [0,T] \times \pd \Omega \ni (t,x)  \mapsto  a(x) \pd_\nu u(t,x), $$
 is well defined in $C((0,T],L^2(\partial\Omega))$, and the operator $\Lambda_{\rho,a,V}$ is bounded from $\mathcal H_{\mathrm{in},\alpha,T_0}$ into $L^2(\Go)$.


\subsection{Normal derivative representation formula}

In view of deriving the representation formula of $\Lambda_{\rho,a,V}$ given in Proposition \ref{pr-rf}, we start by establishing the following technical result..

\begin{lem}
\label{l1} Let $\alpha\in (0,1) \cup (1,2)$.
 For $\rho$, $a$ and $q$ as in Proposition \ref{pr-solution} and $f \in \mathcal H_{\mathrm{in},\alpha,T_0}$, the solution $u$ to \eqref{eq1} reads
\bel{l1a} 
u(t,\cdot)=\sum_{n=1}^\infty u_n(t) \varphi_n,
\ee
for each $t \in [0,T]$, where $u_n(t) := \langle u(t,\cdot) , \varphi_n \rangle_\rho$ expresses as
\bel{l1b}
u_n(t)=-\int_0^t  s^{\alpha-1} E_{\alpha,\alpha}(-\lambda_n s^\alpha) \left( \int_{\pd \Omega} f(t-s,x) \psi_n(x) \dd \sigma(x) \right) \dd s.
\ee
\end{lem}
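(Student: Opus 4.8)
The plan is to identify each Fourier coefficient $u_n(t):=\langle u(t,\cdot),\varphi_n\rangle_\rho$ by showing that it solves a scalar Caputo equation, and then to solve that equation explicitly via the Mittag-Leffler function. Since $\{\varphi_n\}_{n\geq1}$ is an orthonormal basis of $L_\rho^2(\Omega)$ and $u(t,\cdot)\in L_\rho^2(\Omega)$ for every $t\in[0,T]$ by Proposition \ref{pr-solution} (resp. Proposition \ref{pr-solution-wave}), the expansion \eqref{l1a} converges in $L_\rho^2(\Omega)$ with no further argument, so only the computation of the $u_n(t)$ remains.

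To obtain the equation for $u_n$, I would reuse the splitting $u=F+v$ from the proof of Proposition \ref{pr-solution}, where $F\in C^{m+1}([0,T],H^2(\Omega))$ extends $f$ and $v$ solves \eqref{eu2}. Because $\supp f\subset(0,T_0)\times\Gi$, all relevant time derivatives of $f$ vanish at $t=0$, so $F$ can be chosen with vanishing Cauchy data at $t=0$; hence $v_0=0$ and the Cauchy data of $v$ vanish as well. Projecting \eqref{eu2} onto $\varphi_n$ in the unweighted $L^2(\Omega)$ pairing, the time term gives $\langle\rho\pd_t^\alpha v,\varphi_n\rangle=\pd_t^\alpha\langle v,\varphi_n\rangle_\rho$ after commuting $\pd_t^\alpha$ with the spatial integral, and the elliptic part is handled by integrating by parts twice: since $v(t,\cdot)\in H_0^1(\Omega)$ and $\varphi_n|_{\pd\Omega}=0$, both boundary terms drop and \eqref{eq2} yields $\langle-\mdiv{a\nabla v}+Vv,\varphi_n\rangle=\lambda_n\langle v,\varphi_n\rangle_\rho$. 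This produces $\pd_t^\alpha v_n+\lambda_n v_n=\langle G,\varphi_n\rangle$, with $v_n:=\langle v,\varphi_n\rangle_\rho$.

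I would then rewrite $\langle G,\varphi_n\rangle$ in terms of $F$ and of the boundary datum, performing the double integration by parts on the smooth extension $F\in H^2(\Omega)$ rather than on $u$, which sidesteps the fact that $u(t,\cdot)$ is only known to lie in $H^{2\gamma}(\Omega)$ with $\gamma<1$. Using $\varphi_n|_{\pd\Omega}=0$, $F|_{\pd\Omega}=f$, the definition $\psi_n=(a\pd_\nu\varphi_n)|_{\pd\Omega}$ and \eqref{eq2}, one finds $\langle G,\varphi_n\rangle=-\pd_t^\alpha\langle F,\varphi_n\rangle_\rho-\lambda_n\langle F,\varphi_n\rangle_\rho-\int_{\pd\Omega}f(t,\cdot)\psi_n\,\dd\sigma$. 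Since $u_n=v_n+\langle F,\varphi_n\rangle_\rho$, adding $\pd_t^\alpha\langle F,\varphi_n\rangle_\rho+\lambda_n\langle F,\varphi_n\rangle_\rho$ to the equation for $v_n$ collapses the $F$-contributions and gives the closed relation
\[
\pd_t^\alpha u_n(t)+\lambda_n u_n(t)=-\int_{\pd\Omega}f(t,x)\psi_n(x)\,\dd\sigma(x),
\]
supplemented with $u_n(0)=0$ (and $u_n'(0)=0$ when $\alpha\in(1,2)$), which follow from the initial conditions in \eqref{eq1}.

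Finally I would solve this scalar Caputo equation by Laplace transform. As the Cauchy data up to order $m=[\alpha]$ vanish, one has $\mathcal L[\pd_t^\alpha u_n](p)=p^\alpha\widehat{u_n}(p)$, whence $\widehat{u_n}(p)=(p^\alpha+\lambda_n)^{-1}\widehat{F_n}(p)$ with $F_n(t):=-\int_{\pd\Omega}f(t,\cdot)\psi_n\,\dd\sigma$. Combining the standard identity $\mathcal L[t^{\alpha-1}E_{\alpha,\alpha}(-\lambda_n t^\alpha)](p)=(p^\alpha+\lambda_n)^{-1}$ from \cite{P} with the convolution theorem then yields exactly \eqref{l1b}. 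I expect the main obstacle to be the rigorous justification of the two formal steps, namely the commutation of the Caputo derivative $\pd_t^\alpha$ with the spatial pairing $\langle\,\cdot\,,\varphi_n\rangle_\rho$ (which rests on the $t$-regularity from $f\in C^{m+1}$ and on uniform convergence of the relevant series) and the Laplace transform manipulation for functions valued in the limited-regularity spaces of Propositions \ref{pr-solution}--\ref{pr-solution-wave}; by contrast, the double integration by parts is harmless once transferred onto the $H^2(\Omega)$ extension $F$.
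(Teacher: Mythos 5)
Your proposal is correct and reaches \eqref{l1b} by the same two pillars as the paper — the double integration by parts that produces the boundary term $\int_{\pd\Omega} f\,\psi_n\,\dd\sigma$, and the Laplace-transform identity $\cL\left[t^{\alpha-1}E_{\alpha,\alpha}(-\lambda_n t^\alpha)\right](p)=(p^\alpha+\lambda_n)^{-1}$ from \cite[Eq.\ (1.80)]{P} together with injectivity of $\cL$ — but you order the operations differently. The paper extends $f$ by zero to $(0,+\infty)$, takes the Laplace transform of the PDE \emph{first}, and then applies Green's formula to the transformed function $\cL[u](p,\cdot)$, whose trace is $\cL[f](p,\cdot)$; this goes straight to $\cL[u_n](p)=-(p^\alpha+\lambda_n)^{-1}\cL\left[\int_{\pd\Omega}f(\cdot,x)\psi_n(x)\,\dd\sigma(x)\right](p)$ without ever writing a time-domain equation for $u_n$, and without the splitting $u=F+v$. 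You instead project first, using the splitting from Proposition \ref{pr-solution} to derive the modal Caputo ODE $\pd_t^\alpha u_n+\lambda_n u_n=-\int_{\pd\Omega}f\,\psi_n\,\dd\sigma$, and only then transform. The trade-off is real: your route makes the integration by parts rigorous at the solution's limited spatial regularity (it is carried out on $F\in H^2(\Omega)$ and on $v(t,\cdot)\in H_0^1(\Omega)$ via the weak pairing), whereas the paper's Green formula on $\cL[u](p,\cdot)$ implicitly relies on elliptic regularity of the transformed equation to place it in $H^2(\Omega)$; conversely, the paper sidesteps the commutation of $\pd_t^\alpha$ with the spatial pairing, which is precisely the step you (rightly) flag as the delicate one, since the solution of Proposition \ref{pr-solution} is constructed as a mild solution and its pointwise-in-time Caputo regularity is not established. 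Two small gaps to close in your write-up: you should, as the paper does, extend $f$ by zero for $t>T$ and work on $(0,+\infty)$ so that the Laplace transforms exist (your scalar ODE is posed on $[0,T]$ only), and you should note that the resulting transforms are well defined thanks to \eqref{es0}--\eqref{es1} and the bound $u\in C([0,+\infty),L^2(\Omega))$.
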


\begin{proof}
The identity \eqref{l1a} follows readily from the fact that $u$ is lying in $C([0,T],L^2(\Omega))$ and that $\{ \varphi_n;\ n \in \N^* \}$ is an orthonormal basis of $L_\rho^2(\Omega)$.
Next, upon extending $f$ by zero outside $[0,T] \times \pd \Omega$, i.e. putting $f(t,x):=0$ for $(t,x) \in (T,+\infty) \times \pd \Omega$, and denoting by $u$ the solution to \eqref{eq1} in $(0,+\infty) \times \Omega$, we compute for all $n \in \N^*$, the Laplace transform $\cL[u_n](p)$ of $u_n$ which is well defined for  $p \in \R_+^*$ according to estimate \eqref{es1} and Proposition \ref{pr-solution}, \ref{pr-solution-wave}. We get
\bel{rf1}
\cL[u_n](p)=\int_0^{+\infty} u_n(t)e^{-pt} \dd t=\int_\Omega \rho(x) \left( \int_0^{+\infty} u(t,x) e^{-pt} \dd t \right) \phi_n(x) \dd x= 
\langle \cL[u](p,.) , \phi_n \rangle_\rho,
\ee
through standard computations.
Since $\cL[ \pd_t^{\alpha} u](p)=p^{\alpha} \cL[u](p)$, by \cite[Eq. (2.140)]{P} and the third line of \eqref{eq1}, we deduce from \eqref{rf1} upon applying the Laplace transform on both sides of the first line in \eqref{eq1}, that
\bel{rf2}
p^\alpha \cL[u_n](p)= \langle p^\alpha \cL [u](p,.), \phi_n \rangle_\rho = -\langle -\mdiv{a \nabla \cL[u](p)} + q \cL[u](p) , \phi_n \rangle,\ p \in \R_+^*.
\ee
Thus, applying the Green formula in the right hand side of \eqref{rf2}, we get for each $p \in \R_+^*$ that
$$
p^\alpha \cL[u_n](p) = -\lambda_n \langle \cL[u](p) , \phi_n \rangle_\rho -\int_{\pd \Omega} \cL[f](p,x) \psi_n(x) \dd \sigma(x).
$$
As a consequence we have
$$ 
\cL[u_n](p)=-( p^\alpha + \lambda_n )^{-1} \cL \left[\int_{\pd \Omega} f(\cdot,x) \psi_n(x) \dd \sigma(x) \right](p),\  p \in \R_+^*,
$$
for every $n \in \N^*$. This, \cite[Eq. (1.80)]{P} and the injectivity of the Laplace transform, yield \eqref{l1b}.
\end{proof}

We turn now to proving the main result of this section.

\begin{prop}
\label{pr-rf} 
Let $\alpha\in (0,1) \cup (1,2)$ and let $\rho$, $a$, and $V$, be as in Proposition \ref{pr-solution}. Pick $f \in \mathcal H_{\mathrm{in},\alpha,T_0}$, where $T_0 \in (0,T)$ is fixed, and let $u$ be the solution to \eqref{eq1}.
Then, for a.e. $x \in \pd \Omega$, we have
\bel{l2a}
a(x) \pd_\nu  u(T_0,x)=\int_0^{T_0} s^{\alpha-1} \left( \sum_{n=1}^\infty E_{\alpha,\alpha}(-\lambda_n s^\alpha) \left( \int_{\pd \Omega}  f(T_0-s,y) \psi_n(y) \dd\sigma(y) \right) \psi_n(x) \right) \dd s.
\ee
\end{prop}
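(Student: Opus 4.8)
The goal is to compute the weighted normal derivative $a\,\pd_\nu u(T_0,\cdot)$ on $\pd\Omega$, and the strategy is to differentiate the eigenfunction expansion of $u$ established in Lemma \ref{l1} term by term and take its (co)normal trace on the boundary. Starting from \eqref{l1a}-\eqref{l1b}, I would formally write
\begin{equation*}
a(x)\pd_\nu u(T_0,x)=\sum_{n=1}^\infty u_n(T_0)\,\bigl(a\,\pd_\nu\varphi_n\bigr)(x)
=\sum_{n=1}^\infty u_n(T_0)\,\psi_n(x),
\end{equation*}
and then substitute the explicit formula \eqref{l1b} for $u_n(T_0)$. Interchanging the order of the sum over $n$ and the time integral in \eqref{l1b} yields exactly the right-hand side of \eqref{l2a}. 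So the entire content of the proof is the rigorous justification of these two interchanges: differentiating the series under the normal-trace map, and swapping $\sum_n$ with $\int_0^{T_0}\dd s$.

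First I would fix $T_0\in(0,T)$ and work at the single time slice $t=T_0$. From Proposition \ref{pr-solution} (resp.\ \ref{pr-solution-wave}) we know $u(T_0,\cdot)\in H^{2\gamma}(\Omega)$ for every $\gamma\in(0,1)$; choosing $\gamma\in(3/4,1)$ guarantees that the trace $a\,\pd_\nu u(T_0,\cdot)$ is well defined in $L^2(\pd\Omega)$, as already noted after each existence proposition. To legitimize term-by-term differentiation, I would estimate the tail of the series $\sum_n u_n(T_0)\psi_n$ in $L^2(\pd\Omega)$. The key quantitative inputs are: the decay $|u_n(T_0)|\lesssim \lambda_n^{-1}$ coming from estimate \eqref{es2} applied to $E_{\alpha,\alpha}(-\lambda_n s^\alpha)$ inside \eqref{l1b} together with the regularity $f\in C^{[\alpha]+1}([0,T],H^{3/2}(\pd\Omega))$; and the elliptic a priori bound of Lemma \ref{lm-elliptic}, which controls $\sum_n \lambda_n^{-2}\bigl|\int_{\pd\Omega} g\psi_n\bigr|^2$ and, by duality/pairing against the boundary traces $\psi_n$, furnishes the convergence of $\sum_n u_n(T_0)\psi_n$ in $L^2(\pd\Omega)$. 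This is the natural place to invoke the Parseval-type structure already exploited in \eqref{ell1b}.

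For the interchange of $\sum_n$ and $\int_0^{T_0}\dd s$, I would apply Fubini–Tonelli, which requires a dominating bound uniform enough to integrate in $s$. The integrand behaves like $s^{\alpha-1}E_{\alpha,\alpha}(-\lambda_n s^\alpha)$; using \eqref{es2} one gets $s^{\alpha-1}|E_{\alpha,\alpha}(-\lambda_n s^\alpha)|\lesssim s^{\alpha-1}(1+\lambda_n^2 s^{2\alpha})^{-1}$, whose singularity $s^{\alpha-1}$ at $s=0$ is integrable since $\alpha>0$, while the factor $(1+\lambda_n^2 s^{2\alpha})^{-1}$ supplies the $\lambda_n$-decay needed for summability against $\psi_n$ in $L^2(\pd\Omega)$ via Lemma \ref{lm-elliptic}. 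Combining these two bounds with the boundary regularity of $f$ gives an $L^1$-in-$s$, $\ell^2$-in-$n$ majorant, which justifies the exchange and produces \eqref{l2a}.

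The main obstacle I anticipate is the endpoint behavior in $s$ near $0$ together with the simultaneous summation over $n$: one must balance the mild temporal singularity $s^{\alpha-1}$ against the Mittag–Leffler decay in $\lambda_n$ so that neither the $s$-integral nor the $n$-sum is merely conditionally convergent. The delicate point is that the $\lambda_n$-decay rate supplied by \eqref{es2} depends on $s$ through the product $\lambda_n s^\alpha$, so the two limits are genuinely coupled; the cleanest route is to split the $s$-integral at a threshold depending on $\lambda_n^{-1/\alpha}$ and bound the two regimes separately before summing, rather than seeking a single crude uniform bound. Once this coupled estimate is secured, Lemma \ref{lm-elliptic} converts everything into a finite $H^{3/2}(\pd\Omega)$-norm of $f$, and the representation formula \eqref{l2a} follows.
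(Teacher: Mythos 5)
Your overall plan (expand $u(T_0,\cdot)$ via Lemma \ref{l1}, control the coefficients with \eqref{es2} and Lemma \ref{lm-elliptic}, interchange $\sum_n$ and $\int_0^{T_0}$, and take the normal trace) is the same as the paper's, but the quantitative core of your argument has a genuine gap: you never invoke the support hypothesis $\supp f \subset (0,T_0)\times\Gi$, and without it your estimates cannot close. First, a structural point: convergence of $\sum_n c_n\psi_n$ in $L^2(\pd\Omega)$ is not obtained ``by duality/pairing against the $\psi_n$'' from Lemma \ref{lm-elliptic} --- the $\psi_n$ are not an orthogonal system in $L^2(\pd\Omega)$ and their norms grow with $n$. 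The correct mechanism (and the one the paper uses) is to prove convergence of the interior series in $H^2(\Omega)$, i.e.\ $\sum_n \lambda_n^2|c_n|^2<\infty$ (using that the graph norm of $H$ is equivalent to the $H^2(\Omega)$-norm on $\dom(H)$), and then apply continuity of $w\mapsto \pd_\nu w$ from $H^2(\Omega)$ to $L^2(\pd\Omega)$. Now test your bounds against this criterion. Your splitting of the $s$-integral at $s\sim\lambda_n^{-1/\alpha}$ gives, sharply,
\begin{equation*}
\int_0^{T_0} s^{\alpha-1}\abs{E_{\alpha,\alpha}(-\lambda_n s^\alpha)}\,\dd s \lesssim \lambda_n^{-1}
\qquad\Bigl(\text{indeed } \int_0^{\infty} s^{\alpha-1}E_{\alpha,\alpha}(-\lambda s^\alpha)\,\dd s=\lambda^{-1}\Bigr),
\end{equation*}
hence only $\abs{u_n(T_0)}\lesssim \lambda_n^{-1}\sup_s\bigl|\int_{\pd\Omega} f(T_0-s,y)\psi_n(y)\,\dd\sigma(y)\bigr|$, so that $\sum_n\lambda_n^2\abs{u_n(T_0)}^2 \lesssim \sum_n \sup_s\bigl|\int_{\pd\Omega} f\psi_n\,\dd\sigma\bigr|^2$. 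Lemma \ref{lm-elliptic} controls this last sum only with the extra weight $\lambda_n^{-2}$, so nothing converges: pairing $f$ against $\psi_n$ costs a factor $\lambda_n$, which exactly cancels the decay your splitting produces. The same objection defeats your claimed decay $\abs{u_n(T_0)}\lesssim\lambda_n^{-1}$, and this loss is unavoidable near $s=0$, where $E_{\alpha,\alpha}(-\lambda_n s^\alpha)\approx 1/\Gamma(\alpha)$ carries no $\lambda_n$-decay whatsoever.

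The missing idea is precisely the support condition. Since $f\in\mathcal H_{\mathrm{in},\alpha,T_0}$ vanishes for $t$ in a neighbourhood of $T_0$, there is $\delta>0$ such that $f(T_0-s,\cdot)=0$ for $s\in[0,\delta]$. On the region $s\geq\delta$ where the integrand is nonzero, one may use the full quadratic decay \eqref{es2}, which gives $s^{\alpha-1}\abs{E_{\alpha,\alpha}(-\lambda_n s^\alpha)}\leq c\,\lambda_n^{-2}s^{-(1+\alpha)}$; note that this produces the \emph{non-integrable} singularity $s^{-(1+\alpha)}$ rather than the $s^{\alpha-1}$ you propose to integrate, and it is harmless only because it multiplies a function vanishing near $s=0$. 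Keeping the factor $\lambda_n^{-2}$ attached to the pairing and applying Lemma \ref{lm-elliptic} pointwise in $s$ yields
\begin{equation*}
\sum_{n=1}^{+\infty}\lambda_n^{2}\abs{u_n(T_0,s\text{-integrand})}^2 \lesssim s^{-2(1+\alpha)}\norm{f(T_0-s,\cdot)}_{H^{3/2}(\pd\Omega)}^2,
\end{equation*}
which is bounded on $(0,T_0)$ thanks to the support condition; this gives convergence of the series in $H^2(\Omega)$ for each $s$, an integrable-in-$s$ majorant, hence the interchange of sum and integral by dominated convergence, and finally \eqref{l2a} by the trace theorem --- exactly the paper's proof. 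In short: your plan is the right one, but the step ``balance $s^{\alpha-1}$ against the Mittag--Leffler decay by splitting at $\lambda_n^{-1/\alpha}$'' provably cannot close the argument; what closes it is that $f(T_0-s,\cdot)$ vanishes near $s=0$, which your proposal never uses.
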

\begin{proof} 
Let us first establish for each $s \in (0,T_0)$ that the series $\sum_{n=1}^{+\infty} \gamma_n(s) \varphi_n$, where
\bel{l2b}
\gamma_n(s):=-s^{\alpha-1} E_{\alpha,\alpha}(-\lambda_n s^\alpha) \left( \int_{\pd \Omega}  f(T_0-s,y) \psi_n(x) \dd\sigma(x) \right),\ n \in \N^*,
\ee
converges in $H^2(\Omega)$. Actually, since the domain of the operator $H$ is continuously embedded in $H^2(\Omega)$, it is enough to check that $\sum_{n=1}^{+\infty} \lambda_n \gamma_n(s) \varphi_n$ converges in $L^2_\rho(\Omega)$. This can be achieved with the help of \eqref{es2}. Indeed, in view of \eqref{l2b} we get through elementary computations that
\bel{l2c} 
| \gamma_n(s) | \leq c s^{-(1+\alpha)} \lambda_n^{-2} \left| \int_{\pd \Omega}  f(T_0-s,y) \psi_n(x) \dd\sigma(x) \right|,\ s \in (0,T_0),\  n \in \N^*,
\ee
where $c$ is the same as in \eqref{es2}.
This entails
\bea
\sum_{n=1}^{+\infty} \lambda_n^2 | \gamma_n(s) |^2 & \leq & C_\alpha^2 s^{-2(1+\alpha)} \left( \sum_{n=1}^{+\infty} \lambda_n^{-2} \left| \int_{\pd \Omega}  f(T_0-s,y) \psi_n(x) \dd\sigma(x) \right|^2 \right) \nonumber \\
& \leq & c^2 s^{-2(1+\alpha)} \| f(T_0-s,\cdot) \|_{H^{3 \slash 2}(\pd \Omega)}^2,\ s \in (0,T_0), \label{l2d}
\eea
upon applying Lemma \ref{lm-elliptic} with $g=f(T_0-s,\cdot)$, for some constant $c>0$, independent of $s$ and $f$.
As a consequence $\sum_{n=1}^{+\infty} \lambda_n \gamma_n(s) \varphi_n$ converges in $L^2_\rho(\Omega)$ for every $s \in (0,T_0)$, and hence $\sum_{n=1}^{+\infty} \gamma_n(s) \varphi_n$ converges in $H^2(\Omega)$. 

Next, since $\supp{f} \subset (0,T_0) \times \Gi$, it is apparent that $s \mapsto  s^{-(1+\alpha)} \| f(T_0-s,\cdot) \|_{H^{3 \slash 2}(\pd \Omega)} \in L^1(0,T_0)$, and similarly, we see from \eqref{l2d} that $s \mapsto \sum_{n=1}^N \lambda_n \gamma_n(s) \varphi_n \in L^1(0,T_0;L_\rho^2(\Omega))$ for every $N \in \N^*$, as we have
$$ \left\| \sum_{n=1}^N \lambda_n \gamma_n(s) \varphi_n \right\|_{\rho} = \left( \sum_{n=1}^N \lambda_n^2 | \gamma_n(s) |^2 \right)^{1 \slash 2} \leq C s^{-(1+\alpha)} \| f(T_0-s,\cdot) \|_{H^{3 \slash 2}(\pd \Omega)},\ s \in (0,T_0), $$
with $C>0$, independent of $s$ and $N$.
Therefore, $s \mapsto \sum_{n=1}^{+\infty} \gamma_n(s) \varphi_n \in L^1(0,T_0;H^2(\Omega))$ by \eqref{l2d} and Lebesgue dominated convergence theorem, and the identity
\bel{l2d2}
\sum_{n=1}^{+\infty} \left( \int_0^{T_0} \gamma_n(s) \dd s \right) \varphi_n = \int_0^{T_0} \left( \sum_{n=1}^{+\infty} \gamma_n(s) \varphi_n \right) \dd s,
\ee
holds in $H^2(\Omega)$. Recalling from Lemma \ref{l1} that $u(T_0,\cdot) = \sum_{n=1}^{+\infty} \left( \int_0^{T_0} \gamma_n(s) \dd s \right) \varphi_n$ in $L_\rho^2(\Omega)$, then by uniqueness of the limit, we end up getting from \eqref{l2d2}. Moreover, the identity
\bel{l2e}
u(T_0,\cdot) = \int_0^{T_0} \left( \sum_{n=1}^{+\infty} \gamma_n(s) \varphi_n \right) \dd s,
\ee
holds in $H^2(\Omega)$.

Finally, we obtain \eqref{l2a} from the continuity of the trace map $w \in H^2(\Omega) \mapsto \pd_{\nu} w_{| \pd \Omega} \in L^2(\pd \Omega)$ by mimicking all the steps of the derivation of \eqref{l2e}.
\end{proof}

\section{Proof of Theorem \ref{t2}}
\label{sec-proof}
The proof is divided into 4 steps.

\noindent {\it Step 1: Set up.} For $j=1,2$, let $H_j$ be the operator defined by \eqref{op-h} with $\rho=\rho_j$, $a=a_j$ and $V=V_j$, and let $\{ \lambda_{j,n}; n \in \N^* \}$ be the strictly increasing sequence of the eigenvalues of $H_j$. For each $n \in \N^*$, we denote by $m_{j,n} \in \N^*$ the algebraic multiplicity of the eigenvalue $\lambda_{j,n}$ and we introduce a family $\{ \varphi_{j,n,p};\ p=1,\ldots, m_{j,n} \}$ of eigenfunctions of $H_j$, which satisfy 
$$ H_j \varphi_{j,n,p} = \lambda_{j,n} \varphi_{j,n,p}, $$
and form an orthonormal basis in $L_{\rho_j}^2(\Omega)$ of the algebraic eigenspace of $H_j$ associated with $\lambda_{j,n}$ (i.e. the linear sub-space of $L_{\rho_j}^2(\Omega)$ spanned by $\{ \varphi_{j,n,p},\ p=1,\ldots, m_{j,n} \}$). Further, we put for a.e. $(x , y) \in \pd \Omega$,
$$ \Theta_{j,n}(x,y) := \sum_{p=1}^{m_{j,n}} \psi_{j,n,p}(x) \psi_{j,n,p}(y),\ \mbox{where}\ \psi_{j,n,p} := ( a \pd_\nu\phi_{j,n,p} )_{|\partial\Omega}. $$ 
Then, with reference to \eqref{def-DN} and Proposition \ref{pr-rf}, it holds true for every $f \in \Hin$ that
$$ \Lambda_{\rho_j,a_j,V_j} f = \int_0^{T_0} s^{\alpha-1} \left( \sum_{n=1}^{+\infty} E_{\alpha,\alpha}(-\lambda_{j,n} s^{\alpha}) \left( \int_{\partial\Omega} f(T_0-s,y)\Theta_{j,n}(\cdot,y) \dd\sigma(y) \right) \right) \dd s. $$
From this and the assumption $\Lambda_{\rho_1,a_1,V_1}=\Lambda_{\rho_2,a_2,V_2}$ then follows for a.e. $x \in \Go$, that
\bel{id1}
\int_0^{T_0} s^{\alpha-1} \left( \sum_{n=1}^{+\infty} \int_{\pd \Omega} \left( E_{\alpha,\alpha}(-\lambda_{1,n} s^\alpha) \Theta_{1,n}(x,y) - E_{\alpha,\alpha}(-\lambda_{2,n} s^\alpha) \Theta_{2,n}(x,y) \right) f(T_0-s,y) \dd \sigma(y) \right) \dd s=0.
\ee
In view of the integrand appearing in the left hand side of \eqref{id1}, we introduce for every $h \in H^{3 \slash 2}(\pd \Omega)$ such that $\supp{h} \subset \Gi$, the following function
\bel{l3a}
F_{j,h}(z,x) := \sum_{n=1}^{+\infty} E_{\alpha,\alpha}(-\lambda_{j,n} z) \left( \int_{\Gi} \Theta_{j,n}(x,y) h(y) \dd \sigma(y) \right),\ z \in \C,\ x \in \Go.
\ee
Then, the identity \eqref{id1} being valid for every $f \in \Hin$, we find upon taking $f(t,x)=g(t)h(x)$, for $(t,x) \in (0,T_0) \times \Gi$, where $g$ is arbitrary in $C_0^\infty(0,T_0)$ and $h \in H^{3 \slash 2}(\pd \Omega)$ is, as above, supported in $\Gi$, that
\bel{t2a} 
F_{1,h}(s^\alpha,x) = F_{2,h}(s^\alpha,x),\ s \in (0,T_0),\ x \in \Go. 
\ee


\noindent {\it Step 2: Analytic continuation.} We start by establishing the following technical result.

\begin{lem}
\label{l3} 
Fix $\theta_0 \in ( \pi \alpha \slash 2, \pi \alpha)$ and pick $h \in H^{3 \slash 2}(\pd \Omega)$ satisfying $\supp{h} \subset \Gi$. 
Then, both $L^2(\Go)$-valued functions $z \mapsto F_{j,h}(z,\cdot)$, $j=1,2$, defined in \eqref{l3a}, are holomorphic in the sub-domain $\cD_{\theta_0} :=\left\{ z \in \C \setminus \{ 0 \};\ | \mbox{arg}(z) | < \pi - \theta_0 \right\}$.
\end{lem}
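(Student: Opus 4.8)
We need to show that for fixed $h \in H^{3/2}(\partial\Omega)$ with $\supp h \subset \Gi$, the $L^2(\Go)$-valued function
$$F_{j,h}(z,x) = \sum_{n=1}^\infty E_{\alpha,\alpha}(-\lambda_{j,n} z)\left(\int_{\Gi} \Theta_{j,n}(x,y) h(y)\,d\sigma(y)\right)$$
is holomorphic in the sector $\cD_{\theta_0} = \{z \neq 0 : |\arg z| < \pi - \theta_0\}$.

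**Key tools available:**
- Estimate (2.14): $|E_{\alpha,\alpha}(w)| \leq c/(1+|w|^2)$ when $|\arg w| \in [\theta, \pi]$ for $\theta \in (\pi\alpha/2, \pi\alpha)$.
- Lemma 2.5 (elliptic estimate): $\sum_n \lambda_n^{-2}|\int_{\partial\Omega} g\psi_n|^2 \leq c\|g\|_{H^{3/2}}^2$.
- Each $E_{\alpha,\alpha}$ is entire (analytic as a function of $z$).

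**Proof strategy:**

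The standard approach for showing holomorphy of a series of holomorphic functions is to prove **locally uniform convergence** on $\cD_{\theta_0}$, then invoke a Weierstrass-type theorem (Morera + uniform convergence, or the standard result that a locally uniform limit of holomorphic functions is holomorphic). Let me think through this carefully.

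---The plan is to exhibit $z\mapsto F_{j,h}(z,\cdot)$ as a locally uniform limit, on $\cD_{\theta_0}$, of $L^2(\Go)$-valued holomorphic functions, and then to invoke the fact that a locally uniform limit of Banach-space valued holomorphic maps is holomorphic. To set this up I would write $b_{j,n,p}:=\int_{\Gi}\psi_{j,n,p}(y)h(y)\,\dd\sigma(y)$, so that, recalling $\Theta_{j,n}(x,y)=\sum_{p=1}^{m_{j,n}}\psi_{j,n,p}(x)\psi_{j,n,p}(y)$, the coefficient of $E_{\alpha,\alpha}(-\lambda_{j,n}z)$ in \eqref{l3a} is $c_{j,n}:=\sum_{p=1}^{m_{j,n}}b_{j,n,p}\psi_{j,n,p}\in L^2(\pd\Omega)$. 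Each partial sum $S_N(z):=\sum_{n=1}^N E_{\alpha,\alpha}(-\lambda_{j,n}z)\,c_{j,n}$ is an $L^2(\Go)$-valued entire function of $z$, since $E_{\alpha,\alpha}$ is entire and the $c_{j,n}$ are fixed; hence it suffices to prove that $S_N(z)\to F_{j,h}(z,\cdot)$ uniformly on every compact subset $K$ of $\cD_{\theta_0}$.

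First I would record the decay of the Mittag-Leffler factor on $\cD_{\theta_0}$. For $z\in\cD_{\theta_0}$ and $\lambda_{j,n}>0$ the quantity $w:=-\lambda_{j,n}z$ satisfies $|\mathrm{arg}\,w|\in[\theta_0,\pi]$; since $\theta_0\in(\pi\alpha/2,\pi\alpha)$, estimate \eqref{es1} applies and gives $|E_{\alpha,\alpha}(-\lambda_{j,n}z)|\leq c(1+\lambda_{j,n}^2|z|^2)^{-1}$. On a compact $K\subset\cD_{\theta_0}$ one has $|z|\geq\delta_K>0$, whence $|E_{\alpha,\alpha}(-\lambda_{j,n}z)|\leq C_K(1+\lambda_{j,n}^2)^{-1}$ uniformly in $z\in K$ and $n\in\N^*$.

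The key point, and the step where care is needed, is that a naive term-by-term estimate of $\|c_{j,n}\|_{L^2(\Go)}$ through the triangle inequality does not produce a summable series in dimension $d\geq2$. Instead I would exploit orthogonality in $L^2_{\rho_j}(\Omega)$: introducing $w_{j,n}:=\sum_{p=1}^{m_{j,n}}b_{j,n,p}\varphi_{j,n,p}\in\dom(H_j)$, one has $H_j w_{j,n}=\lambda_{j,n}w_{j,n}$, $\|w_{j,n}\|_{\rho_j}^2=\beta_{j,n}^2:=\sum_{p}|b_{j,n,p}|^2$, and $c_{j,n}=a_j\pd_\nu w_{j,n}$ on $\pd\Omega$. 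Consequently the tail of $S_N$ is the normal-derivative trace of $W_{N,M}(z):=\sum_{n=N}^M E_{\alpha,\alpha}(-\lambda_{j,n}z)\,w_{j,n}$, and since distinct eigenspaces are mutually orthogonal in $L^2_{\rho_j}(\Omega)$, the graph-norm equivalence used in the proof of Lemma \ref{lm-elliptic} together with the continuity of the trace $H^2(\Omega)\to L^2(\pd\Omega)$ yields
\bea
\left\| \sum_{n=N}^M E_{\alpha,\alpha}(-\lambda_{j,n}z)\,c_{j,n} \right\|_{L^2(\Go)}^2 & \leq & C \|W_{N,M}(z)\|_{H^2(\Omega)}^2 \nonumber \\
& \leq & C \sum_{n=N}^M (1+\lambda_{j,n}^2)\,|E_{\alpha,\alpha}(-\lambda_{j,n}z)|^2\,\beta_{j,n}^2 . \nonumber
\eea
On $K$ the previous paragraph gives $(1+\lambda_{j,n}^2)|E_{\alpha,\alpha}(-\lambda_{j,n}z)|^2\leq C_K\lambda_{j,n}^{-2}$, so the right-hand side is dominated by $C_K\sum_{n\geq N}\lambda_{j,n}^{-2}\beta_{j,n}^2$, uniformly in $z\in K$.

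Finally, Lemma \ref{lm-elliptic} applied with $g=h$ shows that $\sum_{n}\lambda_{j,n}^{-2}\beta_{j,n}^2=\sum_{n,p}\lambda_{j,n}^{-2}|b_{j,n,p}|^2\leq c\|h\|_{H^{3/2}(\pd\Omega)}^2<\infty$, so the tail above tends to $0$ as $N,M\to\infty$, uniformly on $K$. Hence $(S_N)$ is uniformly Cauchy on $K$, and its limit $F_{j,h}(\cdot,\cdot)$ is holomorphic on $\cD_{\theta_0}$ as an $L^2(\Go)$-valued map; the argument is identical for $j=1$ and $j=2$. The main obstacle is exactly this convergence estimate: the gain comes not from bounding each $\|c_{j,n}\|_{L^2}$ separately but from viewing the boundary series as the normal-derivative trace of an $H^2(\Omega)$-convergent eigenfunction series, which aligns the weight $\lambda_{j,n}^{-2}$ with precisely the quantity controlled by the elliptic estimate of Lemma \ref{lm-elliptic}.
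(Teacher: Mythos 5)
Your proof is correct and follows essentially the same route as the paper: the paper likewise reduces, via the equivalence of the graph norm of $H_j$ with the $H^2(\Omega)$-norm and the continuity of the normal-derivative trace, to proving locally uniform convergence of the $L^2_{\rho_j}(\Omega)$-valued series $\sum_n E_{\alpha,\alpha}(-\lambda_{j,n}z)\,b_{j,n}\,\lambda_{j,n}\varphi_{j,n}$, using exactly the sectorial bound \eqref{es1} together with Lemma \ref{lm-elliptic}, and then concludes by the Weierstrass theorem for analytic functions. Your write-up is simply more explicit than the paper's on two points the paper leaves implicit (tracking the multiplicity indices $p$, and invoking orthogonality of eigenspaces rather than a term-by-term bound), but the underlying argument is identical.
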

\begin{proof} 
Let $j$ be either 1 or 2. Bearing in mind that 
$$
\int_{\Gi} \Theta_{j,n}(\cdot,y) h(y) \dd \sigma(y)
= \sum_{p=1}^{m_{j,n}} \left( \int_{\Gi} h(y) \psi_{j,n,p}(y) \dd \sigma(y) \right) \psi_{j,n,p},\ n \in \N^*,
$$
we see upon arguing as in the derivation of Proposition \ref{pr-rf}, that it is enough to show that the $L_\rho^2(\Omega)$-valued function 
\bel{defGj}
G_j(z):= \sum_{n=1}^{+\infty} E_{\alpha,\alpha}(-\lambda_{j,n} z) \left( \int_{\Gi} h(y) \psi_{j,n}(y) \dd \sigma(y) \right) \lambda_{j,n} \varphi_n,
\ee
is holomorphic in $\cD_{\theta_0}$. 

Further, as $\alpha \in (0,2)$ and $\abs{\mbox{arg}(-z)} \in [\theta_0,\pi]$, we invoke \eqref{es1} and get some positive constant $c$ such that
$$
\abs{E_{\alpha,\alpha}(-\lambda_{j,n} z)} \leq \frac{c}{1+\abs{\lambda_{j,n} z}^2},\ z \in \cD_{\theta_0},\ n \in \N^*.
$$
As a consequence we have
\bel{n1} 
\lambda_{j,n} |E_{\alpha,\alpha}(-\lambda_{j,n} z)| \left| \int_{\Gi} h(y) \psi_{j,n}(y) \dd \sigma(y) \right| 
\leq c | z |^{-2} \lambda_{j,n}^{-1} \left| \int_{\pd \Omega} h(y) \psi_{j,n}(y) \dd \sigma(y) \right|.
\ee
Let $K$ be a compact subset of $\cD_{\theta_0}$. Due to Lemma \ref{lm-elliptic}, \eqref{n1} yields that the series appearing in the right hand side of \eqref{defGj} converges in $L^2_{\rho_j}(\Omega)$, uniformly in $z \in K$. Next, the mapping 
$z \mapsto  E_{\alpha,\alpha}(-\lambda_{j,n} z) \left( \int_{\Gi} h(y) \psi_{j,n}(y) \dd \sigma(y) \right) \lambda_{j,n} \varphi_n$ being holomorphic in $K$ for each $n \in \N^*$, since the Mittag-Leffler function $E_{\alpha,\alpha}$ is holomorphic in $\C$ from the very definition \eqref{def-MLf}, we get that $G_j$ is holomorphic in $K$ as well.
This entails that $G_j$ is analytic in $\cD_{\theta_0}$ since $K$ is arbitrary in $\cD_{\theta_0}$.
\end{proof}

Fix $\theta_0 \in ( \pi \alpha \slash 2, \pi \alpha)$. Since $F_{1,h}(z,x) = F_{2,h}(z,x)$ for a.e. $x \in \Go$ and all $z \in (0,T_0^\alpha)$, according to \eqref{t2a}, then the same is true for $z \in \cD_{\theta_0}$, by Lemma \ref{l3} in virtue of the unique continuation principle for analytic functions. This yields 
\bel{tt2a} 
F_{1,h}(t,x) = F_{2,h}(t,x),\ x \in \Go,\ t \in (0,+\infty).
\ee
Having seen this, we turn now to proving that the identity \eqref{tt2a} yields
\bel{t2b}
\lambda_{1,n}=\lambda_{2,n}\ \mbox{and}\ \Theta_{1,n}(x,y)=\Theta_{2,n}(x,y),\ n \in \N^*,\ (x,y) \in \Go \times \Gi.
\ee
This can be done upon computing the Laplace transform of both sides of \eqref{tt2a}. 

\noindent {\it Step 3: Laplace transform.} We fix $N \in \N^*$, $t \in \R_+^*$ and recall that the $L^2(\pd \Omega)$-norm of 
$$ \sum_{n=1}^N E_{\alpha,\alpha}(-\lambda_{j,n} t^\alpha) \left( \int_{\Gi}\Theta_{j,n}(\cdot,y) h(y) \dd \sigma(y) \right)
= \sum_{n=1}^N E_{\alpha,\alpha}(-\lambda_{j,n} t^\alpha) \left( \sum_{p=1}^{m_{j,n}} \left(  \int_{\Gi} h(y) \psi_{j,n,p}(y) \dd \sigma(y) \right) \psi_{j,n,p} \right), $$ 
is upper bounded (up to some positive multiplicative constant that depends only on $\Omega$) by the $L^2_{\rho_j}(\Omega)$-norm of $\sum_{n=1}^N E_{\alpha,\alpha}(-\lambda_{j,n} t^\alpha) \left( \sum_{p=1}^{m_{j,n}} \left( \int_{\Gi} h(y) \psi_{j,n,p}(y) \dd\sigma(y) \right) \lambda_{j,n} \varphi_{j,n,p} \right)$. Hence we find that
\beas
& & \norm{\sum_{n=1}^N t^{\alpha+1} E_{\alpha,\alpha}(-\lambda_{j,n} t^\alpha) \left( \int_{\Gi}\Theta_{j,n}(\cdot,y) h(y) \dd \sigma(y) \right)}_{L^2(\Go)} \\
& \leq & C t^{\alpha+1} \left( \sum_{n=1}^N \lambda_{j,n}^2 E_{\alpha,\alpha}(-\lambda_{j,n} t^\alpha)^2 \left( \sum_{p=1}^{m_{j,n}} \left| \int_{\pd \Omega} h(y) \psi_{j,n,p}(y) \dd \sigma(y) \right|^2 \right) \right)^{1 \slash 2} \\
& \leq & C t^{1-\alpha} \left( \sum_{n=1}^N  \lambda_{j,n}^{-2} \left( \sum_{p=1}^{m_{j,n}} \left| \int_{\pd \Omega} h(y) \psi_{j,n,p}(y) \dd \sigma(y) \right|^2 \right) \right)^{1 \slash 2},
\eeas
according to \eqref{es2}, the constant $C>0$ depending neither on $N$, nor on $t$. Therefore, we have
$$ \norm{\sum_{n=1}^N t^{\alpha+1} E_{\alpha,\alpha}(-\lambda_{j,n} t^\alpha) \left( \int_{\Gi}\Theta_{j,n}(\cdot,y) h(y) \dd \sigma(y) \right)}_{L^2(\Go)} \leq C t^{1-\alpha} \| h \|_{H^{3 \slash 2}(\pd \Omega)}, $$
by Lemma \ref{lm-elliptic}, and the Lebesgue dominated convergence theorem for $L^2(\Go)$-valued functions yields
\bea
& & \cL \left[\sum_{n=1}^{+\infty}  t^{\alpha+1} E_{\alpha,\alpha}(-\lambda_{j,n} t^\alpha) \left( \int_{\Gi} \Theta_{j,n}(x,y) h(y) \dd \sigma(y) \right) \right](p) \nonumber \\
& =& \sum_{n=1}^{+\infty} \left( \int_{\Gi} \Theta_{j,n}(x,y) h(y) \dd \sigma(y) \right) \cL \left[ t^{\alpha+1} E_{\alpha,\alpha}(-\lambda_{j,n} t^\alpha) \right](p),\ x \in \Go,\ p \in \R_+^*. \label{t2s} 
\eea
We are thus left with the task of computing the Laplace transform of $t \mapsto t^{\alpha+1} E_{\alpha,\alpha}(-\lambda_{j,n} t^\alpha)$ on $\R_+^*$. We find by standard computations that \cite[Eq. (1.80)]{P} implies
\bel{lap}
\cL \left[  t^{\alpha+1} E_{\alpha,\alpha}(-\lambda_{j,n} t^\alpha) \right](p) =  \frac{\dd}{\dd p}^2 \left( \cL \left[  t^{\alpha-1} E_{\alpha,\alpha}(-\lambda_{j,n} t^\alpha) \right](p) \right) = p^{\alpha-2}\left(\frac{2\alpha^2p^\alpha-\alpha(\alpha-1)(p^\alpha+\lambda_{j,n})}{(p^\alpha+\lambda_{j,n})^3}\right),
\ee
for all $p>\lambda_{j,n}^{1 \slash \alpha}$.
Further, since $p \mapsto \cL \left[  t^{\alpha+1} E_{\alpha,\alpha}(-\lambda_{j,n} t^\alpha)\right](p)$ is an analytic function of  $p \in \{z \in \C;\ \Re{z}>0\}$, then \eqref{lap} holds for every $p \in \R_+^*$, and we have
\bel{t2c} \begin{array}{l}
\sum_{n=1}^{+\infty} \frac{(2\alpha^2p^\alpha-\alpha(\alpha-1)(p^\alpha+\lambda_{1,n}))\int_{\Gi} \Theta_{1,n}(x,y) h(y) \dd \sigma(y)}{(p^\alpha+\lambda_{1,n})^3} \\
=
\sum_{n=1}^{+\infty} \frac{(2\alpha^2p^\alpha-\alpha(\alpha-1)(p^\alpha+\lambda_{2,n}))\int_{\Gi} \Theta_{2,n}(x,y) h(y) \dd \sigma(y)}{(p^\alpha+\lambda_{2,n})^3},\ p \in \R_+^*,\ x \in \Go,\end{array}
\ee
by \eqref{l3a}, \eqref{tt2a} and  \eqref{t2s}.

\noindent {\it Step 4: End of the proof.} Consider $\cO :=\C \setminus \{-\lambda_{j,n};\ j=1,2,\ n \in \N^* \}$ and note that, since, for $j=1,2$,   $(\lambda_{j,n})_{n\geq1}$ is a strictly increasing and unbounded sequence, the set $\cO$ is connected. Let $K$ be a compact subset of $\cO :=\C \setminus \{-\lambda_{j,n};\ j=1,2,\ n \in \N^* \}$. Arguing as in the derivation of Lemma \ref{l3}, we see that, for $j=1,2$, the serie
$$\sum_{n=1}^{+\infty} \left(\frac{2\alpha^2z-\alpha(\alpha-1)(z+\lambda_{j,n})}{(z+\lambda_{j,n})^3}\right)  \left( \int_{\Gi}\Theta_{j,n}(x,y) h(y) \dd \sigma(y) \right),$$
converges uniformly with respect to $z \in K$. Thus, since $K$ is arbitrary in $\cO$, the function
$$ z \mapsto  \sum_{n=1}^{+\infty} \left(\frac{2\alpha^2z-\alpha(\alpha-1)(z+\lambda_{j,n})}{(z+\lambda_{j,n})^3}\right) \left( \int_{\Gi} \Theta_{j,n}(\cdot,y) h(y) \dd \sigma(y) \right), $$
 is analytic in $\cO$  and  we deduce from \eqref{t2c},
that 
\bel{t2d}\begin{array}{l}
\sum_{n=1}^{+\infty} \left(\frac{2\alpha^2z-\alpha(\alpha-1)(z+\lambda_{1,n})}{(z+\lambda_{1,n})^3}\right) \left( \int_{\Gi}\Theta_{1,n}(x,y) h(y) \dd \sigma(y) \right)\\
 =
\sum_{n=1}^{+\infty} \left(\frac{2\alpha^2z-\alpha(\alpha-1)(z+\lambda_{2,n})}{(z+\lambda_{2,n})^3}\right) \left( \int_{\Gi}\Theta_{2,n}(x,y) h(y) \dd \sigma(y) \right),\ z \in \cO,\ x \in \Go,\end{array}
\ee
by the unique continuation principle for analytic functions. 
Now, putting $\lambda_{*1}:=\underset{(j,n) \in \{ 1,2 \}\times \N^*}{\min} \lambda_{j,n}$, multiplying both sides of \eqref{t2d} by $(z+\lambda_{*1})^3$, and sending $z$ to $(-\lambda_{*1})$, we obtain that
$$ \lambda_{1,1}=\lambda_{*1}=\lambda_{2,1}\ \mbox{and}\
\int_{\Gi} \Theta_{1,1}(x,y) h(y) \dd \sigma(y) = \int_{\Gi} \Theta_{2,1}(\cdot,y) h(y) \dd \sigma(y)\ \mbox{for\ a.e.}\ x \in \Go. $$
Similarly, by induction on $n \in \N^*$, we find that
$$ \lambda_{1,n}=\lambda_{2,n}\ \mbox{and}\
\int_{\Gi} \Theta_{1,n}(x,y) h(y) \dd \sigma(y) = \int_{\Gi} \Theta_{2,n}(\cdot,y) h(y) \dd \sigma(y)\ \mbox{for\ a.e.}\ x \in \Go. $$
for any function $h \in H^{3 \slash 2}(\pd \Omega)$ supported in $\Gi$, which yields \eqref{t2b}. 
Finally, since $\Gi \cup \Go=\pd \Omega$ and $\Gi \cap \Go\neq\emptyset$, we end up getting that $\mbox{BSD}(\rho_1,a_1,V_1)=\mbox{BSD}(\rho_2,a_2,V_2)$, up to an appropriate choice of the functions $\{ \varphi_{1,n},\ n \in \N^* \}$, from \eqref{t2b} and the end of the proof of Theorem 1.1 in \cite{CK1} (see \cite[pages 975-976]{CK1}).

\section{Results on Riemannian manifolds}
\label{sec-Riemann}
\def\p{\partial}

In this section we prove Theorem \ref{t5}. Then, we establish in the particular case where $\mu=1$ and $q=0$, upon assuming a spectral Hassell-Tao type inequality (see \eqref{Hassell_Tao} below) that the result of Theorem \ref{t5} remains valid when $\overline{\Gi} \cap \overline{\Go} = \emptyset$.

Nevertheless, in the first step of the analysis we assume a slightly more restrictive assumption, i.e. that $\Gi \cap \Go \ne \emptyset$, than the one required by Theorem \ref{t5}.
We consider the weighted measure $\mu dx$, where $dx$ is the Riemannian volume measure, to define the space $L^2(M)$,
since $\Delta_{g,\mu}$ is symmetric with respect to the resulting inner product.
Let us introduce the elliptic operator $A$ acting on $L^2(M)$ with domain 
$D(A)= H^1_0(M)\cap H^2(M)$ 
defined by
\begin{align}
\label{def_op_A}
A h=-\Delta_{g,\mu}h + q h,\quad h\in D(A).
\end{align}
By a compact resolvent argument  we know that the spectrum of $A$ consists of a non-decreasing sequence of eigenvalues $(\lambda_{n})_{n\geq 1}$
and we can introduce the associated  Hilbertian basis of eigenfunctions $(\phi_{n})_{n\geq1}$. 
We define the boundary spectral data as
$$ 
\mathrm{BSD}(M,g,\mu,q;\Gamma) :=\{ (\lambda_n, \psi_{n\vert \Gamma} );\ n\geq1\},
$$
where $\Gamma \subset \pd M$ is open and $\psi_n = \pd_\nu \phi_n$. In view of these BSD, it is easy to see that the results of Sections \ref{sec-set}, \ref{sec-direct} and \ref{sec-proof} remain valid in the framework of Theorem \ref{t5}. In particular, we may repeat the proof of Theorem \ref{t2} 
in the present context to obtain:
\begin{thm}
\label{t3} 
Let $(M_k,g_k)$, $k=1,2$, be two compact and smooth connected  Riemannian manifolds of dimension $d\geq2$ with the same boundary. Let $\mu_k, q_k \in C^\infty(M_k)$
satisfy $\mu_k(x) > 0$ and $q_k(x) \ge 0$
for all $x \in M_k$, $k=1,2$.
Let $\Gi, \Go \subset \pd M_1$
be relatively open
and suppose that $\Go\cap \Gi$ is nonempty.
Suppose, moreover, that $g_1 = g_2$, $\mu_1 = \mu_2=1$
and $\pd_\nu \mu_1 = \pd_\nu \mu_2 = 0$ on $\pd M_1$.
Then, the condition $\Lambda_{M_1,g_1,\mu_1,q_1}= \Lambda_{M_2,g_2,\mu_2,q_2}$ implies that, up to an appropriate choice of the eigenfunctions of the operator $A_1$, we have 
\begin{align}
\label{Mg_BSD_equal}
\mathrm{BSD}(M_1,g_1,\mu_1,q_1;\Go\cup \Gi) 
=
\mathrm{BSD}(M_2,g_2,\mu_2,q_2;\Go\cup \Gi).
\end{align}
\end{thm}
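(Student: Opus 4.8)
The plan is to transfer the entire argument developed in Sections \ref{sec-set}--\ref{sec-proof} for the flat Euclidean setting to the Riemannian framework, since the theorem is explicitly asserted to follow by ``repeating the proof of Theorem \ref{t2} in the present context.'' First I would check that each analytic ingredient used in the flat case has a verbatim counterpart here. The operator $A$ of \eqref{def_op_A} plays the role of $H$ in \eqref{op-h}: it is self-adjoint, strictly positive (since $q \ge 0$ and $\mu > 0$), and has compact resolvent, so its spectrum is a discrete non-decreasing sequence $(\lambda_n)_{n\ge1}$ with an associated Hilbertian basis of eigenfunctions $(\phi_n)_{n\ge1}$ in $L^2(M)$ with the weighted measure $\mu\, dx$. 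The fractional powers $A^s$, the Mittag-Leffler estimates \eqref{es0}--\eqref{es2}, and the a priori elliptic estimate of Lemma \ref{lm-elliptic} depend only on these abstract spectral properties together with elliptic regularity for the second-order operator, all of which hold on a compact smooth manifold with boundary. Hence the direct-problem analysis (Propositions \ref{pr-solution}, \ref{pr-solution-wave}) and the normal-derivative representation formula (Proposition \ref{pr-rf}) carry over, yielding the same series expansion for $\Lambda_{M_j,g_j,\mu_j,q_j}$ in terms of the boundary spectral data $\{(\lambda_{j,n}, \psi_{j,n\vert\Gamma})\}$.

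With these tools in place, I would reproduce the four-step structure of the proof of Theorem \ref{t2}. In Step 1 the hypothesis $\Lambda_{M_1,g_1,\mu_1,q_1}=\Lambda_{M_2,g_2,\mu_2,q_2}$, specialized to inputs of the form $f(t,x)=g(t)h(x)$ with $g \in C_0^\infty(0,T_0)$ and $h$ supported in $\Gi$, produces the identity $F_{1,h}(s^\alpha,x)=F_{2,h}(s^\alpha,x)$ for $s \in (0,T_0)$ and $x \in \Go$, where $F_{j,h}$ is the manifold analogue of \eqref{l3a} built from the kernels $\Theta_{j,n}(x,y)=\sum_p \psi_{j,n,p}(x)\psi_{j,n,p}(y)$. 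In Step 2, the holomorphy of $z \mapsto F_{j,h}(z,\cdot)$ on the sector $\cD_{\theta_0}$ (Lemma \ref{l3}, whose proof uses only \eqref{es1} and Lemma \ref{lm-elliptic}) lets me extend the identity to all $t \in (0,\infty)$ by unique continuation. In Step 3 I would take the Laplace transform of $t \mapsto t^{\alpha+1}E_{\alpha,\alpha}(-\lambda_{j,n}t^\alpha)$, as in \eqref{lap}, to convert the time-domain identity into the meromorphic identity \eqref{t2d} in the spectral variable $z$. Finally, in Step 4, multiplying by $(z+\lambda_{*1})^3$ and letting $z \to -\lambda_{*1}$, then inducting on $n$, gives $\lambda_{1,n}=\lambda_{2,n}$ and $\int_{\Gi}\Theta_{1,n}(x,y)h(y)\,d\sigma(y)=\int_{\Gi}\Theta_{2,n}(x,y)h(y)\,d\sigma(y)$ for $x \in \Go$ and all admissible $h$.

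The concluding step is to upgrade these integrated equalities to pointwise equality of the boundary spectral data $\psi_{j,n\vert\Go\cup\Gi}$. Here the hypothesis $\Gi \cap \Go \ne \emptyset$ (the slightly stronger assumption adopted in this first part of the section, rather than $\overline{\Gi}\cap\overline{\Go}\ne\emptyset$) is what makes the overlap of the input and observation regions nonempty and allows, via the argument at the end of \cite{CK1} (pages 975--976), the simultaneous diagonalization of the two families of eigenfunctions and the choice of a common orthonormal basis on each eigenspace so that $\psi_{1,n}=\psi_{2,n}$ on $\Go\cup\Gi$. This yields \eqref{Mg_BSD_equal} with the boundary set $\Gamma=\Go\cup\Gi$.

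I expect the main obstacle to be the verification that the flat-case lemmas genuinely transfer, in particular the elliptic a priori estimate of Lemma \ref{lm-elliptic}: on the manifold one must invoke the appropriate trace theorem and second-order elliptic regularity for $-\Delta_{g,\mu}+q$ on $(M,g)$ to control $\sum_n \lambda_n^{-2}\abs{\int_{\pd M}h\,\psi_n\,d\sigma}^2$ by $\norm{h}_{H^{3/2}(\pd M)}^2$. Since $(M,g)$ is smooth and $\mu,q$ are smooth with $\mu>0$, these classical results apply, so the obstruction is essentially bookkeeping rather than a genuine analytic difficulty; the novelty, if any, lies in confirming that no use of the Euclidean structure was hidden in the Euclidean proof.
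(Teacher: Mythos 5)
Your proposal is correct and follows essentially the same route as the paper: the paper's proof of this theorem is precisely the observation that the operator $A$ enjoys the same spectral and elliptic-regularity properties as $H$, so that the results of Sections \ref{sec-set}--\ref{sec-proof} (the representation formula, Lemma \ref{l3}, the Laplace-transform step, and the pole-extraction induction of Theorem \ref{t2}) can be repeated verbatim in the Riemannian setting, concluding with the argument of \cite{CK1} using $\Gi \cap \Go \neq \emptyset$. Your write-up in fact supplies more detail than the paper, which compresses the transfer of the flat-case machinery into a single remark.
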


It is well-known that (\ref{Mg_BSD_equal})
implies that $(M_k, g_k)$, $k=1,2$, are isometric,
and that $(\mu_k, q_k)$, $k=1,2$, are on the same orbit of the group of gauge transformations, and
we refer to \cite{Katchalov2001} for a detailed proof.
To our knowledge, all the proofs of this result are based on the Boundary Control method. 
The Boundary Control method was introduced by Belishev in 
\cite{Belishev1987}
where he solved the inverse boundary value problem for the isotropic wave equation, that is, the equation (\ref{eq1})
with $\alpha = 2$, $a = 1$ and $q=0$.
The method was generalized to geometric context
by Belishev and Kurylev \cite{Belishev1992},
and the inverse boundary spectral problem with partial data 
as in \eqref{Mg_BSD_equal} was solved by Katchalov and Kurylev
\cite{Katchalov1998}.
We mention that a reduction similar to Theorem \ref{t3}
was shown in \cite{Katchalov2004} in the case of the heat equation, that is, the equation \eqref{eqM1} with $\alpha = 1$.

Let us now consider two further generalization where the assumption that $\Go\cap \Gi$ is nonempty
is weakened. These generalizations are based on the observation that the proof of Theorem \ref{t2}
gives:

\begin{thm}
\label{t4} 
Let $(M_k,g_k)$, $k=1,2$, be two compact and smooth connected  Riemannian manifolds of dimension $d\geq2$ with the same boundary. Let $\mu_k, q_k \in C^\infty(M_k)$
satisfy $\mu_k(x) > 0$ and $q_k(x) \ge 0$
for all $x \in M_k$, $k=1,2$.
Let $\Gi, \Go \subset \pd M_1$
be relatively open,
and suppose that $g_1 = g_2$, $\mu_1 = \mu_2 = 1$
and $\pd_\nu \mu_1 = \pd_\nu \mu_2 = 0$ on $\pd M_1$.
Then, the condition $\Lambda_{M_1,g_1,\mu_1,q_1}= \Lambda_{M_2,g_2,\mu_2,q_2}$ implies that 
\begin{align}
\label{Mg_Theta_equal}
\lambda_{1,n}=\lambda_{2,n}\ \mbox{and}\ \Theta_{1,n}(x,y)=\Theta_{2,n}(x,y),\ n \in \N^*,\ (x,y) \in \Go \times \Gi,
\end{align}
where, as before, $\{\lambda_{k,n}; n \in \N^*\}$ is the strictly increasing sequence of the Dirichlet eigenvalues of $A_k$ and 
$$
\Theta_{k,n}(x,y) := \sum_{p=1}^{m_{k,n}} \psi_{k,n,p}(x) \psi_{k,n,p}(y),\ \mbox{where}\ \psi_{k,n,p} :=\sum_{i,j=1}^d g_k^{ij}\nu_i\partial_{x_j} \phi_{k,n,p}.
$$
Here the eigenfunctions $\phi_{k,n,p}$ are chosen so that $\phi_{k,n,p}$, $p=1,\dots,m_{k,n}$, form an orthonormal basis of the eigenspace associated with $\lambda_{k,n}$.
\end{thm}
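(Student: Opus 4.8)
The plan is to transcribe, essentially verbatim, the four-step argument used to prove Theorem \ref{t2}, now carried out with the elliptic operator $A_k$ of \eqref{def_op_A} in place of $H_j$ and with the conormal derivative $\psi_{k,n,p} = \sum_{i,j} g_k^{ij}\nu_i \partial_{x_j}\phi_{k,n,p}$ in place of $(a\partial_\nu\phi_{j,n,p})_{|\partial\Omega}$. The crucial point, already announced in the remark preceding the statement, is that \eqref{Mg_Theta_equal} is precisely the intermediate identity \eqref{t2b}, and that this identity is reached at the end of Step 4 of the proof of Theorem \ref{t2} \emph{without} ever invoking the overlap condition $\Gi \cap \Go \ne \emptyset$; the overlap is used only afterwards, in the final gluing of the boundary spectral data. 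Consequently, dropping that assumption costs nothing up to \eqref{Mg_Theta_equal}. Since $\mu_k = 1$, the operator $A_k$ reduces to $-\Delta_{g_k} + q_k$ on $L^2(M_k)$ with the Riemannian volume measure.

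First I would record that, as noted at the beginning of this section, the results of Sections \ref{sec-set}--\ref{sec-proof} --- in particular the a priori elliptic estimate of Lemma \ref{lm-elliptic} and the normal-derivative representation formula of Proposition \ref{pr-rf} --- remain valid for $A_k$, since these rely only on self-adjointness, discreteness of the spectrum, the Mittag-Leffler bounds \eqref{es0}--\eqref{es2}, and elliptic regularity, all of which hold on the smooth manifold $(M_k,g_k)$. With these in hand, the hypothesis $\Lambda_{M_1,g_1,\mu_1,q_1} = \Lambda_{M_2,g_2,\mu_2,q_2}$ yields, exactly as in Step 1, the identity obtained by testing against separable inputs $f(t,x)=g(t)h(x)$ with $g \in C_0^\infty(0,T_0)$ and $h \in H^{3/2}(\partial M_1)$ supported in $\Gi$: namely $F_{1,h}(s^\alpha,x) = F_{2,h}(s^\alpha,x)$ for $s \in (0,T_0)$ and $x \in \Go$, where $F_{k,h}$ is defined through \eqref{l3a} with $\Theta_{k,n}$ as in the statement.

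Next I would run Steps 2 and 3 unchanged. The holomorphy of $z \mapsto F_{k,h}(z,\cdot)$ on $\cD_{\theta_0}$ follows from \eqref{es1} together with the manifold version of Lemma \ref{lm-elliptic}, so the unique continuation principle upgrades the equality of the $F_{k,h}$ from $(0,T_0^\alpha)$ to all of $\R_+^*$; taking Laplace transforms and using \eqref{lap} then produces the rational identity \eqref{t2c} in the variable $p^\alpha$. Finally, in Step 4, analytic continuation in the spectral variable to $\cO = \C \setminus \{-\lambda_{k,n}\}$ gives \eqref{t2d}, and the recursive pole-extraction argument --- multiplying by $(z+\lambda_{*1})^3$, letting $z \to -\lambda_{*1}$, and inducting on $n$ --- yields $\lambda_{1,n}=\lambda_{2,n}$ together with $\int_{\Gi}\Theta_{1,n}(x,y)h(y)\,\dd\sigma(y) = \int_{\Gi}\Theta_{2,n}(x,y)h(y)\,\dd\sigma(y)$ for a.e.\ $x \in \Go$ and every admissible $h$. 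Since $h$ ranges over all $H^{3/2}$ functions supported in $\Gi$, this gives $\Theta_{1,n}(x,y)=\Theta_{2,n}(x,y)$ for $(x,y) \in \Go \times \Gi$, which is \eqref{Mg_Theta_equal}.

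The only genuine point requiring care, and thus the main obstacle, is the bookkeeping about where the overlap hypothesis enters. One must check that every estimate in Steps 1--4 (holomorphy, Laplace inversion, the recursion) uses only that $h$ is supported in $\Gi$ and that observation takes place on $\Go$, and never that these two sets meet. Since the integrals defining $F_{k,h}$ pair the source variable $y \in \Gi$ against the observation point $x \in \Go$ independently, the entire argument goes through for disjoint $\Gi$ and $\Go$, and the conclusion is simply the weaker statement \eqref{Mg_Theta_equal} rather than full equality of the boundary spectral data. A secondary point is to confirm that $\Theta_{k,n}$ is independent of the choice of orthonormal eigenbasis within each eigenspace, which holds because it is the Schwartz kernel of the conormal trace of the spectral projector onto the $\lambda_{k,n}$-eigenspace, hence invariant under orthogonal changes of basis.
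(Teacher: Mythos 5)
Your proposal coincides with the paper's own proof: the paper obtains Theorem \ref{t4} precisely by observing that Steps 1--4 of the proof of Theorem \ref{t2}, transplanted to the operators $A_k$ on the manifolds, reach the identity \eqref{t2b} (which is \eqref{Mg_Theta_equal}) without ever invoking $\Gi \cap \Go \neq \emptyset$, that hypothesis entering only in the subsequent passage from \eqref{t2b} to equality of the full boundary spectral data. One minor caveat: the hypothesis $\mu_1 = \mu_2 = 1$ holds only on $\pd M_1$, not identically on $M_k$, so $A_k$ does not reduce to $-\Delta_{g_k} + q_k$; this misreading is harmless because your argument runs with the weighted operator $A_k$ in any case (the boundary normalization of $\mu_k$ is exactly what makes the Green-formula boundary terms pair $f$ against $\psi_{k,n,p}$ in the same way for $k=1,2$).
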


If $\overline{\Gi} \cap \overline{\Go} \ne \emptyset$,
then
the equation (\ref{Mg_Theta_equal})
implies that the boundary spectral data 
\begin{align}
\label{bsd_ge}
\mathrm{BSD}(M_k,g_k,\mu_k,q_k;\Go), \quad k=1,2,
\end{align}
are gauge equivalent in the sense that
there is a constant $\kappa > 0$
such that up to an appropriate choice of the eigenfunctions of the operator $A_1$, we have 
$\psi_{1,n,p} = \kappa \psi_{2,n,p}$ on $\Go$
for all $n$ and $p$, see
\cite[Theorem 4]{Lassas2010}.
In \cite{Lassas2010}
the authors considered
only operators $A$ of the form (\ref{def_op_A})
with $\mu = 1$ identically. We can actually reduce to this case upon taking $\kappa = \sqrt{\mu}$ in \eqref{gauge_mu_q}.
The gauge equivalence of the boundary spectral data
(\ref{bsd_ge})
implies that $(M_k, g_k)$, $k=1,2$, are isometric
and that $(\mu_k, q_k)$, $k=1,2$, are on the same orbit of the group of gauge transformations,
as can be seen by combining the proofs of 
\cite[Theorems 4.33 and 3.37]{Katchalov2001}. This proves Theorem \ref{t5}.

Finally, let us consider the case where $\Gi$ and $\Go$
are allowed to be far apart. 
Following \cite{LO} we assume that
$\mu_k = 1$ and $q_k = 0$ identically, and that
 both $(M_k, g_k)$, $k=1,2$, satisfy the spectral inequality
\begin{align}
\label{Hassell_Tao}
\lambda_{k,n} \le C \|\psi_{k,n,p}\|_{L^2(\Gi)}^2,
\end{align}
where the constant $C > 0$ is independent of $n$ and $p$.
Hassell and Tao \cite{Hassell2002} showed that all non-trapping Riemannian manifolds $(M_k, g_k)$ satisfy (\ref{Hassell_Tao}) when $\Gi$ is replaced by $\pd M_k$. Moreover,
(\ref{Hassell_Tao}) follows from (and is strictly weaker than) the geometric control condition by Bardos, Lebeau and Rauch \cite{Bardos1992},
see \cite{LO}.
We will now give a reduction to the result in \cite{LO}.
Let us denote by $L_{M,g}$ the hyperbolic DN map associated to the Riemannian manifold $(M,g)$ and restricted to $\Gi \times \Go$, that is, 
$$
L_{M,g}f = \pd_\nu u|_{(0,\infty) \times \Go},
\quad f \in C_0^\infty((0,\infty) \times \Gi),
$$
where $u$ is the solution of (\ref{eqM1})
with $\mu = 1$, $q=0$ identically and $\alpha = 2$. The map $L_{M_k,g_k}$ has the representation
$$
L_{M_k,g_k}f(t,x) = \sum_{n \in \N^*} \int_0^t \int_{\Gi}
f(s,y) \frac{\sin(\sqrt{\lambda_{k,n}} (t-s))}{\sqrt{\lambda_{k,n}}}
\Theta_{k,n}(x,y) dy ds,
$$
where $dy$ is the Riemannian surface measure on $\pd M_1$, 
see e.g. \cite[Lemma 3.6]{Katchalov2001}.
Hence (\ref{Mg_Theta_equal})
implies that $L_{M_1,g_1} = L_{M_2,g_2}$,
and therefore $(M_k, g_k)$, $k=1,2$, are isometric \cite{LO}. We have shown:

\begin{thm}
\label{t6}
Let $(M_k,g_k)$, $k=1,2$, be two compact and smooth connected  Riemannian manifolds of dimension $d\geq2$ with the same boundary.
Let $\Gi, \Go \subset \pd M_1$
be relatively open, and suppose that $g_1 = g_2$ on $\pd M_1$.
Suppose, moreover, that both $(M_k, g_k)$, $k=1,2$, satisfy the spectral inequality (\ref{Hassell_Tao}).
Then,  the condition $\Lambda_{M_1,g_1, 1, 0}= \Lambda_{M_2,g_2, 1, 0}$ implies that $(M_k, g_k)$, $k=1,2$, are isometric.
\end{thm}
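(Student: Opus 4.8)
The plan is to reduce the fractional inverse problem entirely to the already-solved inverse problem for the wave equation, using the boundary spectral data as the intermediary. We never analyze the fractional equation directly beyond what has already been established; instead we transfer all of the recovered information into a hyperbolic Dirichlet-to-Neumann map and then quote the geometric result of \cite{LO}. As a first step I would apply Theorem \ref{t4} in the present setting. Since $\mu_k \equiv 1$ forces $\pd_\nu \mu_k = 0$ automatically, $q_k \equiv 0$, and $g_1 = g_2$ on $\pd M_1$, all hypotheses of Theorem \ref{t4} are satisfied, so the assumed equality $\Lambda_{M_1,g_1,1,0} = \Lambda_{M_2,g_2,1,0}$ yields \eqref{Mg_Theta_equal}, namely $\lambda_{1,n} = \lambda_{2,n}$ and $\Theta_{1,n}(x,y) = \Theta_{2,n}(x,y)$ for all $n \in \N^*$ and $(x,y) \in \Go \times \Gi$. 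This is where the genuinely fractional analysis of Sections \ref{sec-direct} and \ref{sec-proof} is consumed; afterwards only the pairs $\{(\lambda_{k,n}, \Theta_{k,n})\}$ survive and the order $\alpha$ has disappeared from the problem.

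Second, I would introduce the partial hyperbolic Dirichlet-to-Neumann map $L_{M_k,g_k}$, that is, the flux operator for \eqref{eqM1} with $\alpha = 2$, $\mu = 1$, $q = 0$, mapping sources supported in $(0,\infty) \times \Gi$ to the normal derivative observed on $\Go$. The crucial computation is the spectral series representation of $L_{M_k,g_k}$ with kernel $\sum_n \sin(\sqrt{\lambda_{k,n}}(t-s))\, \lambda_{k,n}^{-1/2}\,\Theta_{k,n}(x,y)$, obtained by separation of variables in the wave equation exactly as in \cite[Lemma 3.6]{Katchalov2001}. Since \eqref{Mg_Theta_equal} provides termwise equality of the eigenvalues and of the traces $\Theta_{k,n}$ on $\Go \times \Gi$, the two kernels coincide and therefore $L_{M_1,g_1} = L_{M_2,g_2}$. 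Some care is needed to make the representation rigorous: one must verify convergence in the appropriate distributional or $L^2$-valued sense, which follows from an elliptic trace bound of the type of Lemma \ref{lm-elliptic} together with the regularizing effect of the sine kernel.

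Third and finally, I would invoke \cite{LO}: equality of the partial hyperbolic DN maps $L_{M_1,g_1} = L_{M_2,g_2}$, under the spectral inequality \eqref{Hassell_Tao} assumed on both manifolds, implies that $(M_1, g_1)$ and $(M_2, g_2)$ are isometric. The role of \eqref{Hassell_Tao} is precisely to supply enough observability from $\Gi$ for the Boundary Control method to recover the whole manifold even when $\overline{\Gi} \cap \overline{\Go} = \emptyset$; by \cite{Hassell2002} this inequality holds for all non-trapping geometries.

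I expect the substantive obstacle to lie entirely in the imported hyperbolic step: extracting an isometry from partial wave data with $\Gi$ and $\Go$ far apart requires the full strength of the Boundary Control method coupled with the observability furnished by \eqref{Hassell_Tao}, and this deep geometric analysis is what we take as a black box from \cite{LO}. Within the argument proper, the only technical point demanding attention is the justification of the series representation for $L_{M_k,g_k}$ and the interchange of summation with the time integration, but this is routine given the estimates already at our disposal.
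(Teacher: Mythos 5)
Your proposal is correct and takes essentially the same approach as the paper: you apply Theorem \ref{t4} to obtain \eqref{Mg_Theta_equal}, pass to the hyperbolic DN map via the spectral representation of \cite[Lemma 3.6]{Katchalov2001} to conclude $L_{M_1,g_1}=L_{M_2,g_2}$, and then invoke \cite{LO} under the spectral inequality \eqref{Hassell_Tao}. This is exactly the paper's reduction, so nothing further is needed.
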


We do not know if Theorem \ref{t6}
holds for operators with varying $\mu$ and $q$, see the discussion in 
\cite[pp. 7-8]{Lassas2014}.

\section*{Acknowledgments}
The first author would like to thank Lorenzo Brasco for his remarks and  fruitful discussions. The fourth author is partially supported by Grant-in-Aid for Scientific
Research (S) 15H05740 of Japan Society for the Promotion of Science.

\vspace*{1cm}
\noindent {\sc Yavar Kian}, Aix-Marseille Universit\'e, CNRS, CPT UMR 7332, 13288 Marseille, and Universit\'e de Toulon, CNRS, CPT UMR 7332, 83957 La Garde, France.\\
E-mail: {\tt yavar.kian@univ-amu.fr}. \vspace*{.5cm} \\
\noindent {\sc Lauri Oksanen}, Department of Mathematics, University College London, Gower Street, London, WC1E 6BT, UK.\\
E-mail: {\tt l.oksanen@ucl.ac.uk}. \vspace*{.5cm}\\
\noindent {\sc Eric Soccorsi}, Aix-Marseille Universit\'e, CNRS, CPT UMR 7332, 13288 Marseille, and Universit\'e de Toulon, CNRS, CPT UMR 7332, 83957 La Garde, France.\\
E-mail: {\tt eric.soccorsi@univ-amu.fr}. \vspace*{.5cm}\\
\noindent {\sc Masahiro Yamamoto}, Department of Mathematical Sciences, The University of Tokyo 3-8-1, Komaba, Meguro, Tokyo 153, Japan.\\
E-mail: {\tt myama@ms.u-tokyo.ac.jp}.


\begin{thebibliography}{99}
%
\bibitem{AG}
{\sc E.E. Adams and L.W. Gelhar}, {\em Field study of dispersion in a 
heterogeneous aquifer 2. Spatial moments analysis}, 
Water Resources Res. {\bf 28} (1992), 3293-3307.

\bibitem{Bardos1992} {\sc C. Bardos, G. Lebeau and J. Rauch}, {\em Sharp sufficient conditions for the observation, control, and stabilization of waves from the boundary}, SIAM J. Control Optim., \textbf{30} (1992), 1024-1065
\bibitem{AW}{\sc O.P. Agarwal},
	{\em Solution for a fractional diffusion-wave equation defined in a 
	bounded domain}, 
	Nonlinear Dyn., {\bf 29} (2002), 145-155.
\bibitem{BY}{\sc S. Beckers and M. Yamamoto}, {\em Regularity and uniqueness of solution to linear diffusion equation with multiple time-fractional derivatives}, International Series of Numerical Mathematics,  \textbf{164} (2013),  45-55.
\bibitem{Belishev1987}
{\sc M. Belishev}, {\em An approach to multidimensional inverse problems for the wave equation}, Dokl. Akad. Nauk SSSR, \textbf{297} (1987), 524-527.
\bibitem{Belishev1992} {\sc M. Belishev and Y. Kurylev}, {\em To the reconstruction of a Riemannian manifold via its spectral data (BC-method)}, Comm. Partial Differential Equations, \textbf{17} (1992), 767-804.
\bibitem{BCY}{\sc M. Bellassoued, M. Choulli and M. Yamamoto}, {\em Stability estimate for an inverse wave equation and a multidimensional Borg-Levinson theorem}, J. Diff. Equat., \textbf{247}(2)  (2009), 465-494. 
\bibitem{BF}{\sc M. Bellassoued and D. Dos Santos Ferreira},   {\em Stability estimates for the anisotropic wave equation from the Dirichlet-to-Neumann map}, Inverse Problems and Imaging,  \textbf{5} No 4 (2011), 745-773.
\bibitem{BJY}{\sc M. Bellassoued, D. Jellali and M. Yamamoto}, {\em  Lipschitz stability for a hyperbolic inverse problem by finite local boundary data}, Appl. Anal., \textbf{85} (2006),  1219-1243.
 \bibitem{A} {\sc I. Ben Aicha}, {\em  Stability estimate for hyperbolic inverse problem with time dependent coefficient}, Inverse Problems, \textbf{31} (2015), 125010.
	\bibitem{BK}{\sc A.L. Bukhgeim and M.V. Klibanov}, {\em Global uniqueness of class of multidimensional inverse problems}, Sov. Math.
Dokl., \textbf{24} (1981), 244-247.
	\bibitem{CSLG}{\sc J. Carcione, F. Sanchez-Sesma, F. Luz\'on and J. Perez Gavil\'an}, 	{\em Theory and simulation of time-fractional fluid diffusion in porous media}, 
Journal of Physics A: Mathematical and Theoretical, \textbf{46} (2013),  345501 (23pp).
\bibitem{CK1} {\sc B. Canuto and O. Kavian}, {\em Determining Coefficients in a Class of Heat Equations via Boundary Measurements}, SIAM Journal on Mathematical Analysis, \textbf{32} no. 5 (2001), 963-986.
\bibitem{CK2} {\sc B. Canuto and O. Kavian}, {\em Determining Two Coefficients in Elliptic Operators via Boundary Spectral Data: a Uniqueness Result}, Bolletino Unione Mat. Ital. Sez. B Artic. Ric. Mat. (8), \textbf{7} no. 1 (2004), 207-230.
	\bibitem{CNYY}{\sc M. Cheng, J. Nakagawa, M. Yamamoto and T. Yamazaki},
{\em  Uniqueness in an inverse problem for a one
dimensional fractional diffusion equation},
Inverse Problems,
\textbf{25}
(2009), 115002.

	\bibitem{CXY}{\sc J. Cheng, X. Xiang and M. Yamamoto}, {\em Carleman estimate for a fractional diffusion
equation with half order and application}, Appl. Anal., \textbf{90} (2011), no. 9, 1355-1371.

\bibitem{Cho}{\sc M. Choulli}, {\em Une introduction aux probl\`emes inverses elliptiques et paraboliques}, Math\'ematiques et Applications, Vol. 65, Springer-Verlag, Berlin, 2009.


	\bibitem{CK}{\sc M. Choulli and Y. Kian}, 
	{\em Stability of the determination of a time-dependent coefficient in 
	parabolic equations}, 
	MCRF, {\bf 3} (2) (2013), 143-160.
\bibitem{CS}{\sc M. Choulli and P. Stefanov}, {\em Stability for the multi-dimensional Borg-Levinson theorem with partial spectral data}, 
Commun. Partial Diff. Eqns., \textbf{38} (3) (2013), 455-476.
\bibitem{CY}{\sc M. Choulli and  M. Yamamoto},
{\em Some stability estimates in determining sources and coefficients},
J. Inv. Ill-Posed Problems 14 (4) (2006), 355-373.
\bibitem{CLS}{\sc M. Cristofol, S. Li and E. Soccorsi}, {\em Determining the waveguide conductivity in a hyperbolic equation from a single measurement on the lateral boundary}, arXiv:1501.01384.
 \bibitem{FK}{\sc K. Fujishiro and Y. Kian}, {\em Determination of time dependent factors of coefficients in fractional diffusion equations}, preprint, arXiv:1501.01945.
	\bibitem{GK}{\sc P. Gaitan and Y. Kian}, {\em A stability result for a time-dependent potential in a cylindrical domain}, Inverse Problems, \textbf{29} (6) (2013), 065006.
	

\bibitem{G}{\sc P. Grisvard}, {\em Elliptic problems in nonsmooth domains}, Pitman, London, 1985.

\bibitem{GLYama}{\sc R. Gorenflo, Y. Luchko and M. Yamamoto},
{\em Time-fractional diffusion equation in the fractional
Sobolev spaces}, Fractional Calculus and Applied Analysis,
{\bf 18} (2015), 799-820.

	\bibitem{GM}{\sc R. Gorenflo and F. Mainardi},
	{\em Fractional diffusion processes: probability distributions and 
	continuous time random walk.}
	In: Rangarajan G, Ding M, editors.
	Processes with long range correlations.
	Vol. 621, Lecture Notes in Physics. Berlin: Springer; (2003), 148-166.


\bibitem{Hassell2002} {\sc A. Hassell and T. Tao}, {\em Upper and lower bounds for normal derivatives of Dirichlet eigenfunctions Math. Res. Lett.} {\bf 9} (2002), 289-305.
		
\bibitem{HNWY}{\sc Y. Hatano, J. Nakagawa, S. Wang and M. Yamamoto},
{\em Determination of order in fractional diffusion
equation},
J. Math-for-Ind. 5A, (2013), 51-57.
\bibitem{I} {\sc H. Isozaki}, {\em Some remarks on the multi-dimensional Borg-Levinson theorem},
J. Math. Kyoto Univ., \textbf{31} (3) (1991), 743-753.
\bibitem{Katchalov1998} {\sc A. Katchalov and Y. Kurylev}, {\em Multidimensional inverse problem with incomplete boundary spectral data}, Comm. Partial Differential Equations, \textbf{23} (1998), 55-95.

\bibitem{Katchalov2001} {\sc A. Katchalov,  Y. Kurylev and M. Lassas}, {\em Inverse boundary spectral problems}, Chapman \& Hall/CRC, Boca Raton, FL, 2001, 123, xx+290.

\bibitem{Katchalov2004} {\sc A. Katchalov,  Y. Kurylev, and M. Lassas}, {\em Equivalence of time-domain inverse problems and boundary spectral problem}, Inverse problems, \textbf{20} (2004), 419-436.

\bibitem{KKS}{\sc O. Kavian, Y. Kian and E. Soccorsi}, {\em Uniqueness and stability results for an inverse spectral problem in a periodic waveguide}, J. Math. Pures Appl., \textbf{104} (2015), 1160-1189.
 \bibitem{Ki1}{\sc Y. Kian}, {\em Stability of the determination of a coefficient for the wave equation in an infinite waveguide}, 	 Inverse Probl. Imaging, \textbf{8} (3) (2014),  713-732.
 \bibitem{Ki}{\sc Y. Kian}, {\em  A multidimensional Borg-Levinson theorem for magnetic Schr\"odinger operators with partial spectral data},  arXiv:1504.04514.
  \bibitem{KY}{\sc Y. Kian and M. Yamamoto}, {\em On existence and uniqueness of solutions for semilinear fractional wave equations},  arXiv:1510.03478.
\bibitem{KSTruj} {\sc A.A. Kilbas, H.M. Srivastava and J.J. Trujillo},
{\em Theory and applications
of fractional differential equations}, Elsevier, Amsterdam, 2006.

\bibitem{Lassas2010} {\sc M. Lassas and L.Oksanen},  {\em An inverse problem for a wave equation with sources and observations on disjoint sets}, Inverse Problems,  \textbf{26} (2010), 085012.


\bibitem{LO}{\sc M. Lassas and L.Oksanen}, {\em Inverse problem for the Riemannian wave equation with Dirichlet data and Neumann data on disjoint sets}, Duke Math. J., \textbf{163} no. 6 (2014), 1071-1103.

\bibitem{Lassas2014} {\sc M. Lassas and L. Oksanen}, {\em Local reconstruction of a Riemannian manifold from a restriction of the hyperbolic Dirichlet-to-Neumann operator}, Inverse Problems and Applications (Eds. P. Stefanov, A. Vasy, M. Zworski). Contemporatry Mathematics, \textbf{615} (2014), 223-233.
\bibitem{LIY}{\sc Z. Li, O. Yu. Imanuvilov and M. Yamamoto}, {\em Uniqueness in inverse boundary value problems for fractional diffusion equations}, preprint,  arXiv:1404.7024.

\bibitem{LM1}{\sc J.-L. Lions and E. Magen\`es}, {\em Probl\`emes aux limites 
non homog\`enes et applications}, vol. 1, Dunod, Paris, 1968.

\bibitem{LL}{\sc S. Liu and L. Oksanen}, {\em A Lipschitz stable reconstruction formula for the inverse problem for the wave equation}, Trans. Amer. Math. Soc., \textbf{368} (2016), 319-335.
\bibitem{LRY}{\sc Y. Liu, W. Rundell and M. Yamamoto}, {\em Strong maximum principle for fractional diffusion equations and an application to an inverse source problem}, preprint,  	arXiv:1507.00845.
\bibitem{L}{\sc Y. Luchko}, 
	{\em Initial-boundary value problems for the generalized 
	time-fractional diffusion equation}, 
	in: Proceedings of 3rd IFAC Workshop on Fractional Differentiation and 
	Its Applications (FDA08), Ankara, Turkey, 05-07 November (2008).
 \bibitem{MK}{\sc R. Metzler and J. Klafter},
	{\em The random walk's guide to anomalous diffusion: a fractional 
	dynamics approach}, 
	Physics reports, {\bf 339} (2000), 1-77.
\bibitem{MR}{\sc K.S. Miller and B. Ross}, {\em An Introduction to the Fractional Calculus
and Fractional Differential Equations}, Wiley, 1993.
\bibitem{MY}{\sc L. Miller and M. Yamamoto}, {\em Coefficient inverse problem for a fractional diffusion
equation}, Inverse Problems, \textbf{29} (2013), 075013  (8pp).
	\bibitem{Mo} {\sc  C. Montalto}, {\em Stable determination of a simple metric, a 
co-vector field and a potential from the hyperbolic Dirichlet-to-Neumann map}, Commun. Partial Diff. Eqns., \textbf{39} (2014), 120-145.
\bibitem{NSU} {\sc A. Nachman, J. Sylvester and G. Uhlmann}, {\em An n-dimensional Borg-Levinson theorem}, Comm. Math. Phys., \textbf{115} (4) (1988), 595-605.
	\bibitem{RA}{\sc H.E. Roman and P.A. Alemany},
	{\em Continuous-time random walks and the fractional diffusion 
	equation},
	J. Phys. A, {\bf 27} (1994), 3407-3410.
\bibitem{SY}{\sc K. Sakamoto and M. Yamamoto}, {\em Initial value/boundary value problems for fractional diffusion-wave equations and applications to some inverse problems}, J. Math. Anal. Appl.,  \textbf{382} (2011),  426-447.
	\bibitem{SM}{\sc S.G. Samko, A.A. Kilbas and O.I. Marichev},
	{\em Fractional Integrals and Derivatives},
	Gordon and Breach Science Publishers, Philadelphia, (1993).
\bibitem{SU1}{\sc  P. Stefanov and G. Uhlmann}, {\em Stability estimates for the hyperbolic Dirichlet to Neumann map in anisotropic media},
J. Funct. Anal., \textbf{154} (1998), 330-358.
\bibitem{SU2}{\sc  P. Stefanov and G. Uhlmann}, {\em  Stable determination of the hyperbolic Dirichlet-to-Neumann map
for generic simple metrics},  International Math Research Notices 
(IMRN), {\bf 17} (2005), 1047-1061.
 	\bibitem{SU3}{\sc  P. Stefanov and G. Uhlmann}, {\em Recovery of a source term or a speed with one measurement and applications}, Trans. Amer. Math. Soc., \textbf{365} (11) (2013), 5737-5758.
\bibitem{P}{\sc I. Podlubny}, {\em Fractional differential equations},  Academic Press, San Diego, 1999.
	\bibitem{YZ}{\sc M. Yamamoto and Y. Zhang},{\em Conditional stability in determining a zeroth-order coefficient
in a half-order fractional diffusion equation by a Carleman estimate}, Inverse Problems,
\textbf{28} (2012), no. 10, 105010  (10 pp).
\end{thebibliography}
\end{document}